\documentclass[11pt]{article}

\usepackage{fullpage}
\usepackage{amsmath}
\usepackage{amssymb}
\usepackage{amsthm}
\usepackage{graphicx}
\usepackage{color}
\usepackage{mathdots}
\usepackage{mathtools}

\DeclareMathOperator*{\argmax}{argmax}

\DeclareMathOperator*{\rank}{rank}

\newcommand{\reals}{\mathbb{R}}
\newcommand{\Sn}{{\mathcal S}(n)}
\newcommand{\Ln}{{\mathcal L}_n}

\newtheorem{theorem}{Theorem}
\newtheorem{lemma}[theorem]{Lemma}

\newtheorem{proposition}[theorem]{Proposition}

\newtheorem{definition}[theorem]{Definition}
\newtheorem{example}[theorem]{Example}

\begin{document}

\title{Iterated Linear Optimization}
\author{Pedro Felzenszwalb \and Caroline Klivans \and Alice Paul}

\maketitle

\begin{abstract}
We introduce a fixed point iteration process built on optimization of
a linear function over a compact domain.  We prove the process always
converges to a fixed point and explore the set of fixed points in
various convex sets.  In particular, we consider elliptopes and derive
an algebraic characterization of their fixed points.  We show that the
attractive fixed points of an elliptope are exactly its vertices.
Finally, we discuss how fixed point iteration can be used for rounding
the solution of a semidefinite programming relaxation.
\end{abstract}

MSC: 90C25, 90C27, 15B48, 15A18

Keywords: Fixed Point Iteration, Linear Optimization, Semidefinite
Programming, Elliptope.

\section{Introduction}
We introduce a fixed point iteration process built on optimization of
a linear function over a compact domain.  Given a domain $\Delta
\subset \mathbb{R}^n$, the process generates a sequence $\{x_0, x_1,
x_2, \ldots\}$, $x_i \in \Delta$, where $x_{i+1}$ maximizes a linear
function defined by $x_i$.  The iteration is guaranteed to converge
for all compact domains, as shown in Theorem~\ref{thm:converges}.  We
focus on convex sets, both polyhedral and smooth.  The fixed points of
linear optimization reflect interesting geometric properties of the
underlying convex set.  Fixed point iteration is a common methodology
in numerical analysis and optimization (see, e.g., \cite{fp,
  fpinverse}).  Moreover, fixed point iteration defines a discrete
dynamical system (\cite{Galor,Holmgren}).  There are several notions
of stability for such systems, and we consider the attractive and
repulsive fixed points of various sets.

We focus in particular on the set of fixed points of linear
optimization in elliptopes.  Elliptopes are a family of convex bodies
that arise naturally in semidefinite programming (SDP) relaxations of
combinatorial optimization problems (see, e.g.,
\cite{Goemans,Laurent,Whatis,Alizadeh}).  A key step in using a convex
relaxation for combinatorial optimization involves rounding.  A
solution found in the relaxed convex body must be \emph{rounded} to a
discrete solution that satisfies the initial combinatorial problem.
For the elliptope, combinatorial solutions correspond precisely to its
vertices.  All points in the elliptope correspond to a positive
semidefinite matrix of a certain form.  In Theorem~\ref{thm:mfp}, we
use this perspective to prove an algebraic characterization of the
fixed points of iterated linear optimization in the elliptope.
Furthermore, we show that the vertices of the elliptope are exactly
the attractive fixed points of our iteration process,
Theorem~\ref{thm:attractive}.  Each step of fixed point iteration
solves a relaxation to the closest vertex problem.  By iterating the
process we obtain a deterministic method for rounding the solution of
an SDP relaxation.

The problem of rounding the solution of an SDP has fundamental
applications in combinatorial optimization
(\cite{Goemans,Frieze,Raghavendra,Barak}).  In a companion paper
(\cite{clustering}) we apply the fixed point iteration process to
clustering.  The approach is based on the classical SDP relaxation for
$k$-way max-cut defined in \cite{Frieze}, combined with iterated
linear optimization for rounding.

\section{Iterated Optimization}

Let $\Delta \subset \reals^n$ be a compact convex set containing the
origin.

Let $T$ be the map defined by linear optimization over $\Delta$,
$$\textrm{for } x \in \reals^n, \, \, \, \,  T(x) = \argmax_{y \in \Delta} \, \, x \cdot y.$$

We consider the process of fixed point iteration with $T$.  That is,
we are interested in sequences $(x_0,x_1,\ldots)$ such that
$$x_{i+1} = T(x_i).$$

Note that the $\argmax$ in the definition of $T$ may not be unique.
In this case $T(x)$ is set valued.  When we write $x_{i+1} =
T(x_i)$ we allow $x_{i+1}$ to be \emph{any} element of $T(x_i)$.  

The fixed point iteration process can be seen as an iterative method
to maximize $f(x) = \frac{1}{2} ||x||^2$ over a convex domain.  To see
this let $g(x) = f(x_i) + \nabla f(x_i) \cdot (x-x_i)$.  The function
$g(x)$ lower-bounds $f(x)$ and the two functions coincide at $x_i$.
Since $\nabla f(x_i) = x_i$ we see that $x_{i+1}$ maximizes $g(x)$.
Therefore $f(x_{i+1}) \ge g(x_{i+1}) \ge g(x_i) = f(x_i)$.  The use of
a linear approximation in each step often leads to an optimization
problem that can be solved efficiently using interior point methods
and related techniques. Moreover, while $f(x)$ may have many global
maxima, the fixed point iteration process can be used to find a
maximum that is ``near'' an initial point (see Section
~\ref{sec:elliptopeITER}).

The interpretation of fixed point iteration with $T$ as a method to
maximize $f(x) = \frac{1}{2} ||x||^2$ is related to the Frank-Wolfe
method \cite{Frank}, although the Frank-Wolfe algorithm is normally
used to \emph{minimize} a convex function over a convex domain.

\subsection{Convergence}

We first prove that iteration with $T$ converges to a set of fixed
points.  While there are many general results about the convergence of
fixed point iteration (\cite{fp}), these results do not apply to our
setting because $T$ is neither contractive nor continuous.

\begin{theorem}
  Let $\{x_i\}$ be a sequence generated by iteration with $T$.  Then
  $\{x_i\}$ has at least one limit point.  If the sequence has more
  than one limit point the set of limit points is connected.
  Moreover, every limit point is a fixed point of $T$.
\label{thm:converges} 
\end{theorem}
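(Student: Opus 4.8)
The plan is to use the potential function $f(x) = \tfrac12\|x\|^2$ from the discussion above, and to show that consecutive iterates get arbitrarily close; everything then follows from this together with compactness of $\Delta$. I would organize the argument into four steps.

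\textbf{Step 1 (existence of a limit point).} For $i \ge 1$ we have $x_i \in \Delta$, which is compact, so the sequence is bounded and Bolzano--Weierstrass supplies at least one limit point. Moreover the computation in the text gives $f(x_{i+1}) \ge f(x_i)$, so $\|x_i\|$ is nondecreasing and bounded, hence $f(x_i) \to r$ for some $r \ge 0$.

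\textbf{Step 2 (consecutive differences vanish).} Since $f$ is quadratic, the linearization $g(x) = f(x_i) + x_i\cdot(x-x_i)$ satisfies $f(y) - g(y) = \tfrac12\|y - x_i\|^2$ for all $y$. Because $x_{i+1}$ maximizes $g$ over $\Delta$ and $x_i \in \Delta$, we have $g(x_{i+1}) \ge g(x_i) = f(x_i)$, and therefore $f(x_{i+1}) - f(x_i) \ge \tfrac12\|x_{i+1}-x_i\|^2 \ge 0$. Since $f(x_i)$ converges, the left-hand side tends to $0$, so $\|x_{i+1}-x_i\| \to 0$.

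\textbf{Step 3 (limit points are fixed points).} Let $x_{i_k} \to x^*$. By Step 2, $x_{i_k+1} \to x^*$ as well, and $x^* \in \Delta$ since $\Delta$ is closed. For every $y \in \Delta$ we have $x_{i_k}\cdot x_{i_k+1} \ge x_{i_k}\cdot y$; letting $k \to \infty$ yields $\|x^*\|^2 \ge x^*\cdot y$. As $x^* \in \Delta$ attains this value, $x^* \in \argmax_{y \in \Delta} x^*\cdot y = T(x^*)$, i.e.\ $x^*$ is a fixed point.

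\textbf{Step 4 (connectedness).} This is the standard topological fact that a bounded sequence with $\|x_{i+1}-x_i\| \to 0$ has a connected (and compact) limit set $L$. If $L = A \sqcup B$ with $A,B$ nonempty, compact and $\mathrm{dist}(A,B) = 3\delta > 0$, first note that $\mathrm{dist}(x_i,L) \to 0$ (otherwise a subsequence bounded away from $L$ would produce a limit point not in $L$). Pick $N$ so that for $i \ge N$ both $\mathrm{dist}(x_i,L) < \delta$ and $\|x_{i+1}-x_i\| < \delta$. Then each such $x_i$ lies within $\delta$ of exactly one of $A$, $B$, and consecutive iterates cannot switch sides, so $x_N, x_{N+1}, \dots$ all stay near one of them, contradicting that both $A$ and $B$ contain limit points.

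The main obstacle is Step 4: Steps 1--3 are short computations with the potential $f$, whereas the connectedness claim rests on the topological lemma above, whose proof genuinely needs both conclusions $\|x_{i+1}-x_i\| \to 0$ and $\mathrm{dist}(x_i,L)\to 0$. I would state and prove this lemma separately so the logical structure stays clean.
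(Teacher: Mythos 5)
Your proposal is correct. Steps 1 and 2 are essentially the same computation as the paper, just organized around the potential $f(x)=\tfrac12\|x\|^2$ and its linearization rather than around the raw inner-product inequalities; the key bound $\|x_{i+1}-x_i\|^2\le\|x_{i+1}\|^2-\|x_i\|^2$ is identical. The genuine differences are in the last two steps. For the fixed-point claim the paper argues by contradiction: assuming $y\cdot x^*>\|x^*\|^2$, it finds a nearby $x_i$ with $y\cdot x_i>\|x^*\|^2=a$ and derives $a_{i+1}>a$, contradicting monotone convergence of $a_i$. You instead pass directly to the limit in the optimality condition $x_{i_k}\cdot x_{i_k+1}\ge x_{i_k}\cdot y$, using the fact (from Step 2) that $x_{i_k+1}\to x^*$ as well. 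Both are valid; the paper's version does not need $\|x_{i+1}-x_i\|\to 0$ for this part, whereas yours does, but in exchange your argument is more direct and avoids having to identify $\|x^*\|^2$ with $a$. For connectedness the paper defers to a citation (Asi\'c--Adamovi\'c type result); you prove the topological lemma from scratch, correctly supplying the two hypotheses it needs, $\|x_{i+1}-x_i\|\to 0$ and $\mathrm{dist}(x_i,L)\to 0$. That makes your write-up more self-contained, at the cost of some length. No gaps.
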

\begin{proof}
Let $\{x_i\}$ be a sequence where $x_0 \in \Delta$ is an arbitrary
starting point and $\forall i $ $ x_{i+1} = T(x_i)$.  Again, note that
the map $T$ may be set valued, and we allow for any choice of $x_{i+1}
\in T(x_i)$ at each stage of the iteration.

By definition of $T$ we have
\begin{equation}
  x_{i+1}\cdot x_i \ge x_i \cdot x_i.
  \label{eqn:step1}
\end{equation}

Using $||x_{i+1}-x_i||^2  \ge 0$ and (\ref{eqn:step1}) gives
\begin{equation}
  x_{i+1} \cdot x_{i+1} \ge x_{i+1} \cdot x_i.
  \label{eqn:step2}
\end{equation}

Let
$$a_i = x_i \cdot x_i = ||x_i||^2.$$ Together (\ref{eqn:step1}) and
(\ref{eqn:step2}) imply that $a_{i} \le a_{i+1}$.  Since the sequence
$\{a_i\}$ is non-decreasing and bounded it converges.  Let $a =
\lim_{i \rightarrow \infty} a_i$.

Since the sequence $\{x_i\}$ is bounded there is a subsequence of
$\{x_i\}$ that converges.  Therefore the sequence $\{x_i\}$ has at
least one limit point.

Using (\ref{eqn:step1}) we see that
\begin{align*}
  ||x_{i+1}-x_i||^2 & = x_{i+1} \cdot x_{i+1} + x_i \cdot x_i - 2 x_{i+1} \cdot x_i, \\
  & \le x_{i+1} \cdot x_{i+1} - x_i \cdot x_i, \\
  & = a_{i+1} - a_i.
\end{align*}
Therefore $\lim_{i \rightarrow \infty} ||x_{i+1}-x_{i}||=0$.  This
implies the sequence $\{x_i\}$ has a single limit point or the set of
limit points is connected (see, e.g., \cite{Asic}).

Now let $x^*$ be a limit point of $\{x_i\}$.  We claim any such $x^*$
is a fixed point.

Suppose $x^*$ is not a fixed point.  Then there must exist $y \in
\Delta$ such that $y \cdot x^* > x^* \cdot x^*$.  Since there is a
subsequence of $\{x_i\}$ that converges to $x^*$, there is an element
$x_i$ that is sufficiently close to $x^*$ such that $y \cdot x_i > x^*
\cdot x^*$.  Since $x_{i+1} = T(x_i)$, $$x_{i+1} \cdot x_i \ge y \cdot
x_i$$ and
$$x_{i+1} \cdot x_{i+1} \ge x_{i+1} \cdot x_{i} \ge y \cdot x_i > x^* \cdot x^* = a \ge x_{i+1} \cdot x_{i+1},$$ which is a contradiction.
\end{proof}

Note that if the set of fixed points in $\Delta$ is finite then any
sequence $\{x_i\}$ generated by $T$ converges to a single fixed point
$x^*$.  This follows from the fact that the set of limit points is
connected.

In this paper we focus on convex spaces $\Delta$.  Note, however, that
the above proof only uses compactness and not convexity.

\section{Fixed Points}

A \emph{fixed point} of $\Delta$ is a point $x \in \Delta$ such that
$x \in T(x)$.  Geometrically, fixed points can be described in terms
of normal cones.

For a point $x \in \Delta$, the \emph{normal cone of $\Delta$ at $x$}
is the set
$$N(\Delta,x) = \{ y \in \reals^n \,|\, y \cdot x \ge y \cdot z \;\; \forall z \in \Delta \}.$$

Note that $x \in T(x)$, i.e. $x$ is a fixed point,  exactly when $x \in N(\Delta,x)$.

The fixed points are distinguished boundary points of $\Delta$
that can be of significant interest.  

\begin{example}[Elliptope]
Figure~\ref{fig:elliptope} illustrates the fixed points of $T$ in the
elliptope ${\cal L}_3$, a convex shape that arises in the SDP
relaxation of max-cut and various other combinatorial optimization
problems.  In this $3$-dimensional example the fixed points of $T$
include both the vertices of the convex shape and several other
distinguished points.  We will analyze the fixed points of the
elliptope in arbitrary dimensions in Section~\ref{sec:elliptope}.
\end{example}

\begin{figure}
  \centering
  \includegraphics[width=3in]{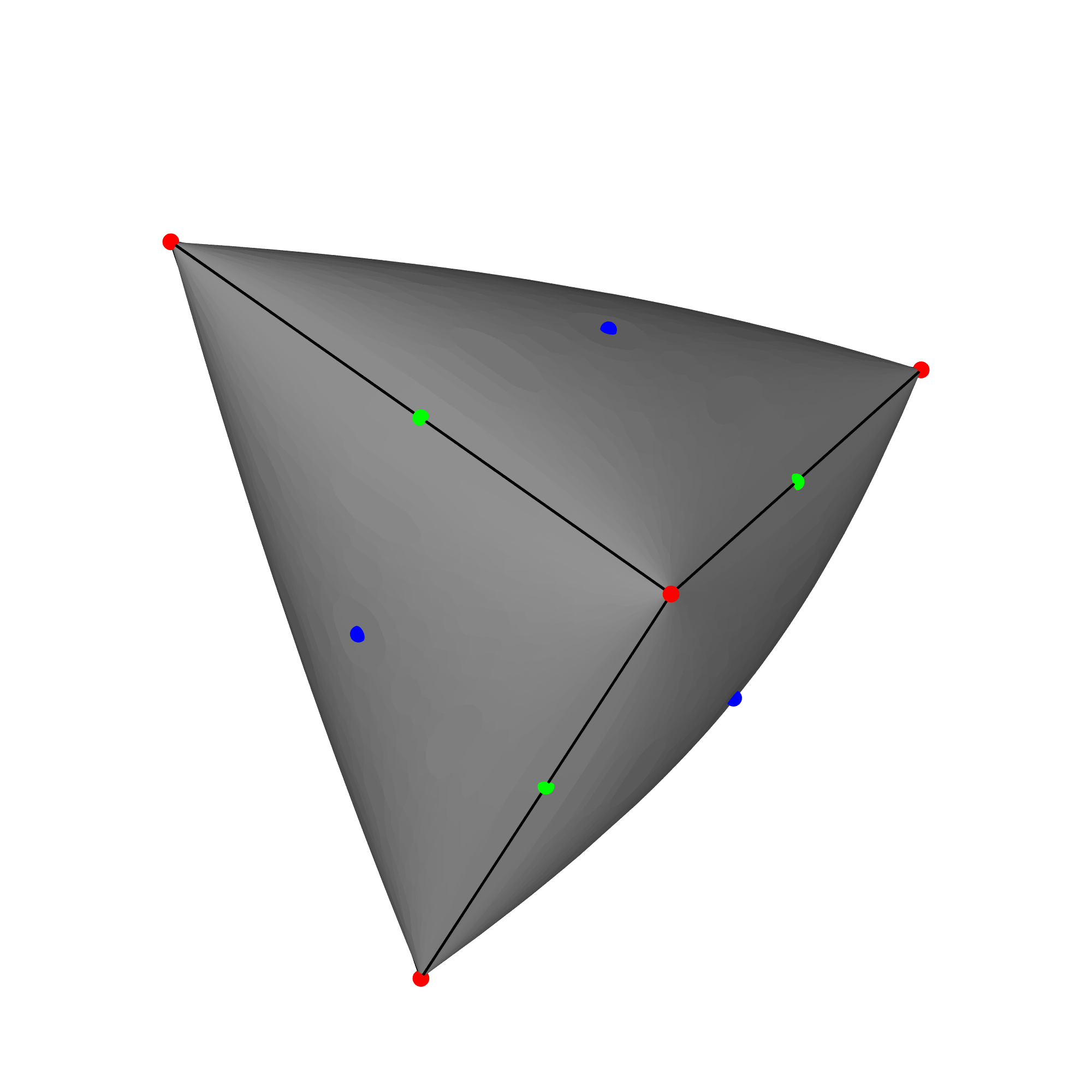}
  \caption{The elliptope ${\cal L}_3$.  The highlighted points are the
    fixed points of $T$.  Points in the elliptope correspond to
    certain positive semidefinite matrices (see
    Section~\ref{sec:elliptope}).  The red fixed points are
    irreducible matrices with rank 1, the blue fixed points are
    irreducible matrices with rank 2 and the green fixed points are
    reducible matrices with rank 2.}
  \label{fig:elliptope}
\end{figure}

Let $n(x)$ be the normal direction at a smooth boundary point $x \in
\Delta$.  Then $x \in N(\Delta,x)$ when $x = \lambda n(x)$.  In
particular, $x$ is a fixed point exactly when the line through $x$
with direction $n(x)$ includes the origin.

\begin{example}[Off-centered disk]
\label{ex:disk}
Figure~\ref{fig:circle} shows an example where $\Delta$ is an
off-center disk.  The disk contains the origin but not in its center.
In this case there are two fixed points $A$ and $R$, where the line
defined by the center of the disk and the origin crosses the boundary.
Figure~\ref{fig:circle}(a) illustrates the computation of $T(x)$ as
the boundary point in a tangent line perpendicular to $x$.
Figure~\ref{fig:circle}(b) shows the result of fixed point iteration
$x_{i+1} = T(x_i)$ starting at $x_0$.  Note how $x_0$ is near one
fixed point ($R$) but the iteration converges to the other fixed point
($A$).  In this case $A$ is an attractive fixed point, while $R$ is
repelling (see Section~\ref{sec:types}).  If we start the iteration
anywhere except at $R$ the process converges to $A$.  Note that if the
origin is at the center of the disk then all boundary points are fixed
points (neither attractive nor repelling).
\end{example}

\begin{example}[Ellipse]
\label{ex:ellipse}
Figure~\ref{fig:ellipse} shows an example where $\Delta$ is an ellipse
centered at the origin.  In this case there are two attractive ($A_1,
A_2$) and two repelling ($R_1, R_2$) fixed points.  Each attractive
fixed point is in the major axis of the ellipse.
The repelling fixed points are in the minor axis.
\end{example}

\begin{figure}
  \centerline{\begin{tabular}{cc}
  \includegraphics[height=3in]{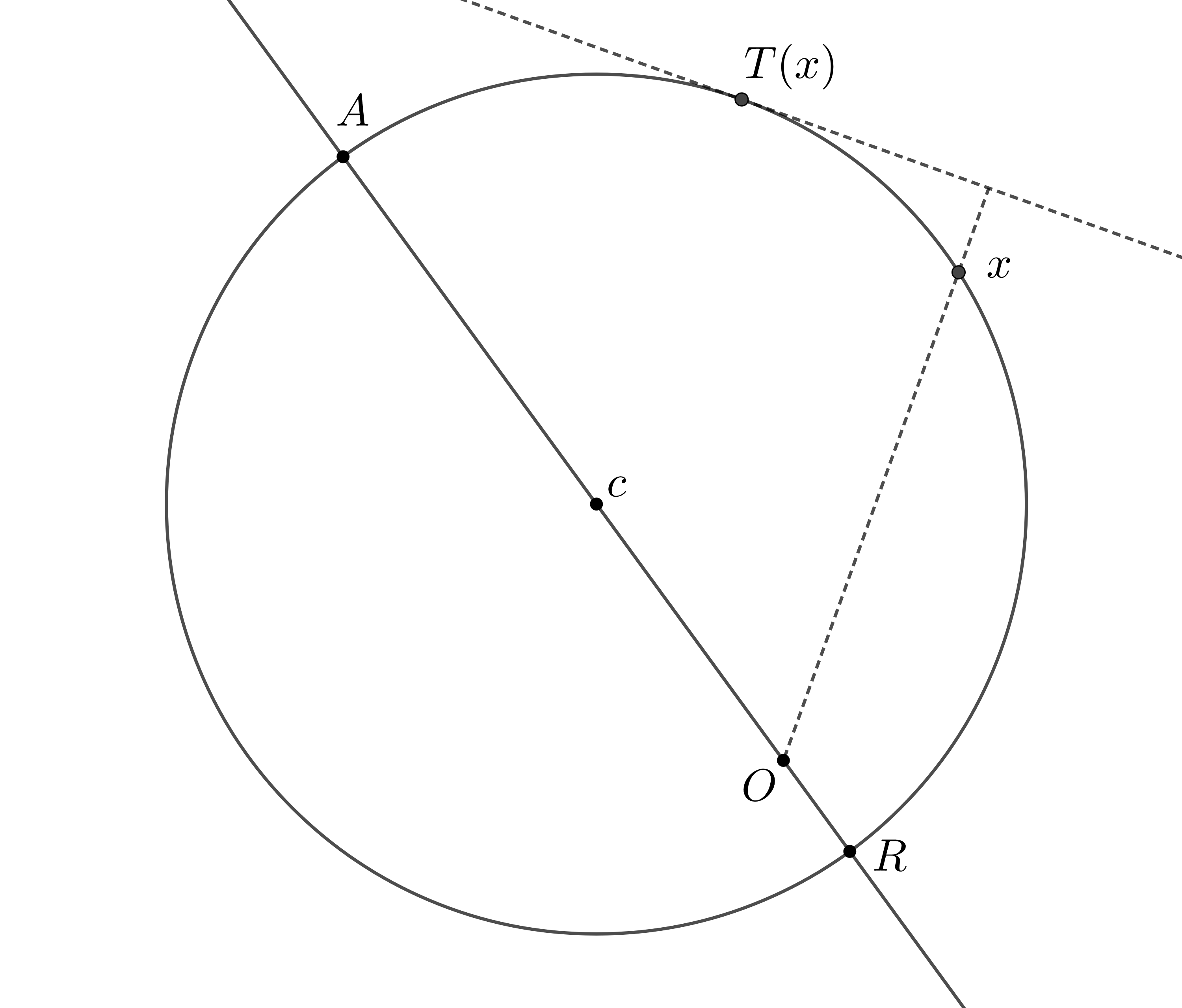} & \includegraphics[height=3in]{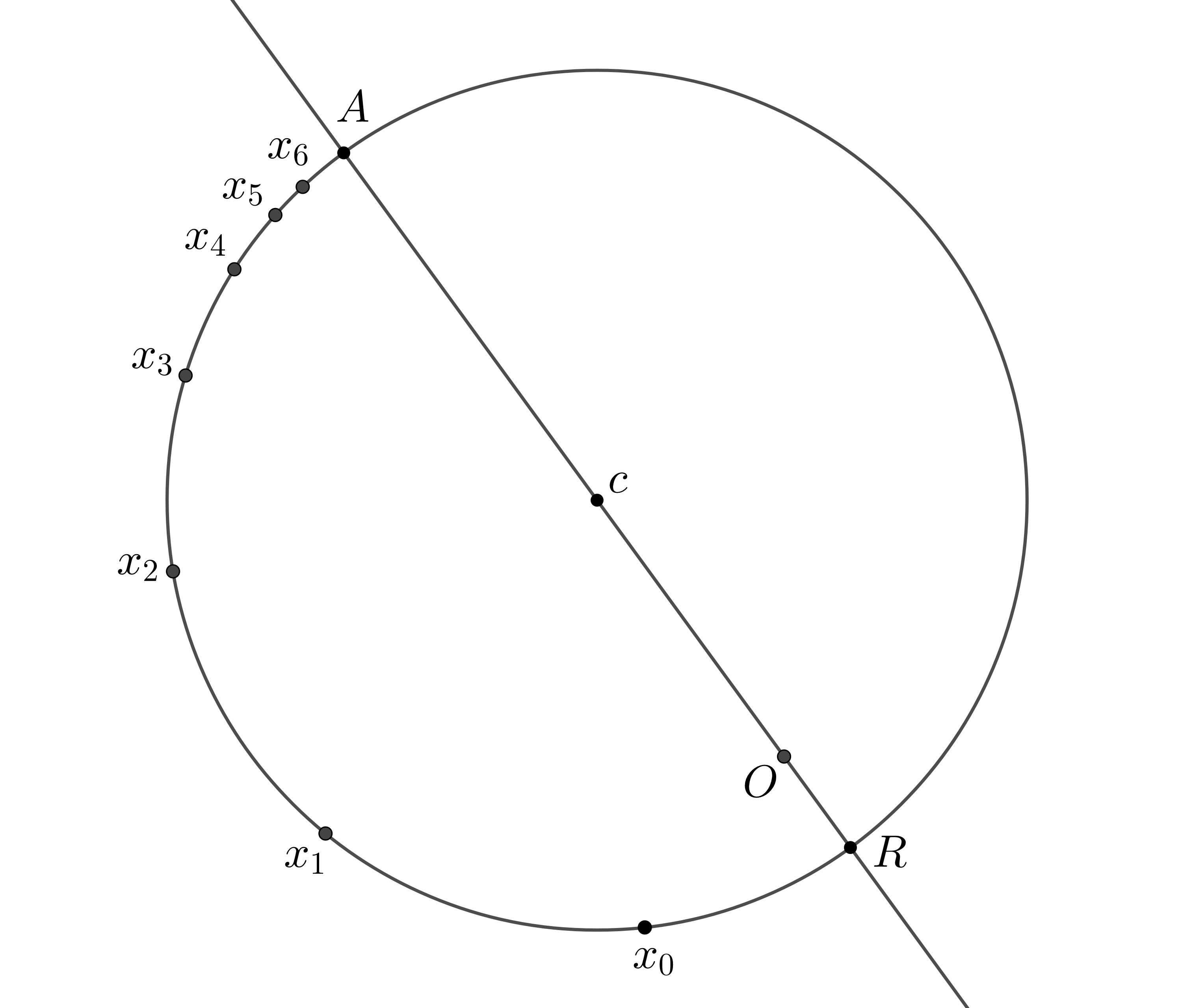} \\
  (a) & (b)
  \end{tabular}}
  \caption{(a) Illustration of the map $T$ and (b) fixed point
    iteration when $\Delta$ is an off-center disk.  Here $c$ is the
    center of the disk while $O$ is the origin.  The points $A$ and
    $R$ are the fixed points of $T$.  Fixed point iteration converges
    to $A$ even when we start from a point closer to $R$.  In this
    case $A$ is an attractive fixed point, while $R$ is a repelling
    fixed point.}
    \label{fig:circle}
\end{figure}

\begin{figure}
      \centerline{\includegraphics[height=3in]{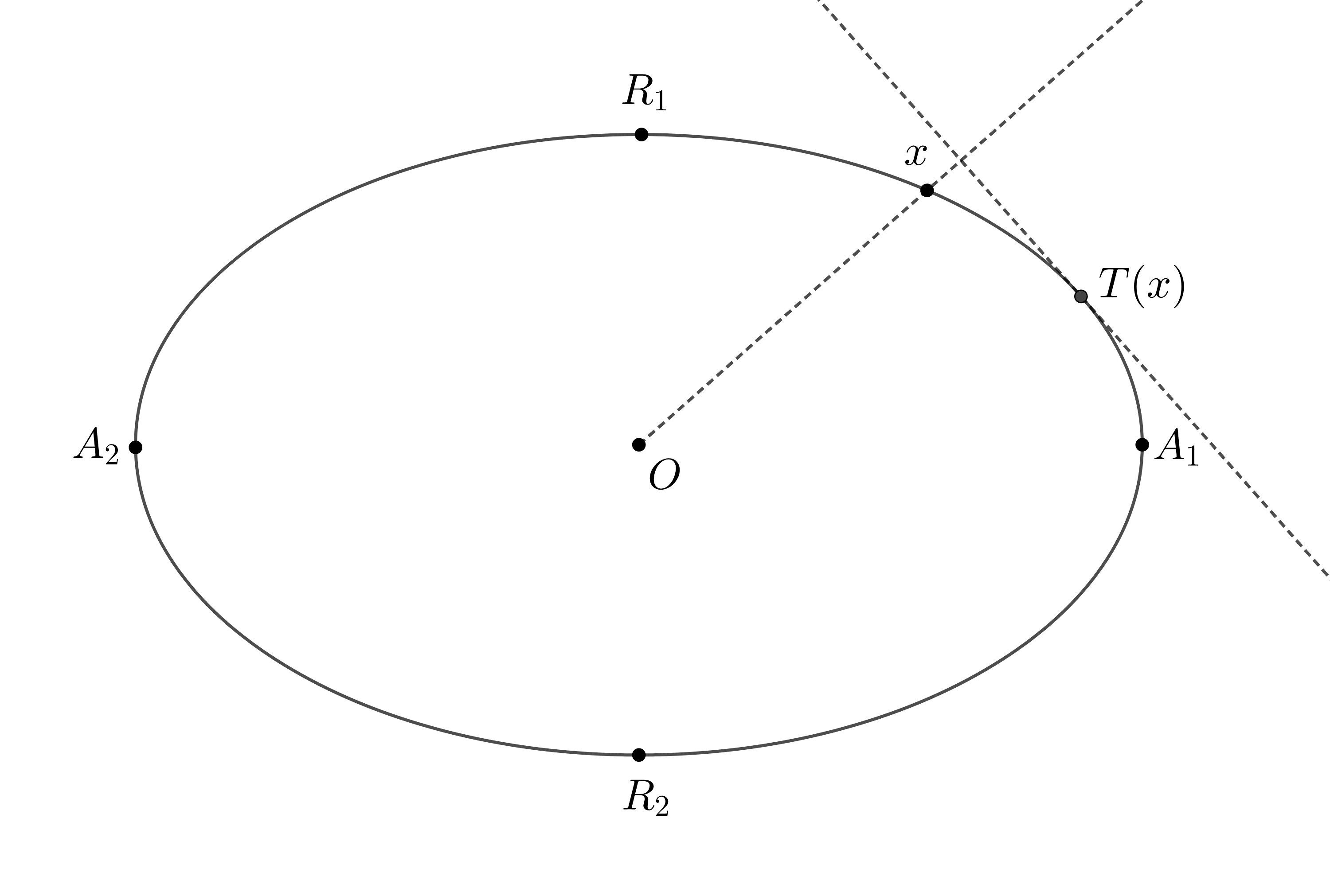}}
      \caption{The fixed points of $T$ in an ellipse.  Here the
        ellipse is centered at the origin $O$.  There are two attractive
        $(A_1,A_2)$ and two repelling $(R_1,R_2)$ fixed points.}
  \label{fig:ellipse}
\end{figure}

In Example~\ref{ex:disk} and Example~\ref{ex:ellipse} above we see
that iteration with $T$ converges towards a fixed point, but never
reaches it in a finite number of steps (unless we start at the fixed
point).  This is always the case when $\Delta$ is a region with smooth
boundary.  For such regions the normal cones are one dimensional and
for $x$ to be a fixed point it must be that $x \in N(\Delta,x)$.
Since no other boundary point can be in the normal cone at $x$, fixed
point iteration cannot reach $x$ in a finite number of steps.

On the other hand, consider the case when $\Delta$ is a polytope.  In
this case fixed point iteration with $T$ always reaches a fixed point
in a finite number of steps.  For a generic point $x \in \Delta$,
$T(x)$ is a vertex of the polytope, and iteration with $T$ defines a
sequence of vertices with increasing norms.  The set of vertices that
are fixed points depends on the position of the origin.  Fixed points
can also exist in higher dimensional faces, but are then never
attractive.

\begin{figure}
  \centerline{\includegraphics[height=2.7in]{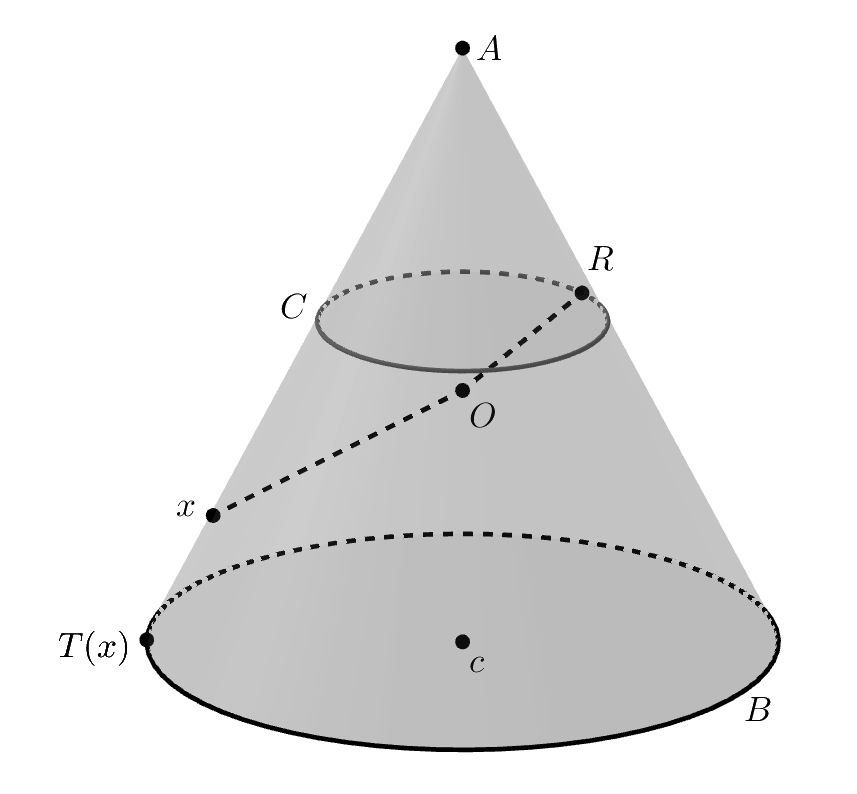}}
  \caption{The fixed points of $T$ in a cone.  Here $O$ is the origin
    while $c$ is the center of the base.  The top of the cone is an
    attractive fixed point $A$.  There is a circle of repelling fixed
    points $C$ in the middle of the cone.  There is also a circle $B$
    of fixed points at the base.  Any point above $C$ maps to $A$
    under $T$, while any point below $C$ maps to $B$.  Fixed points in
    the base are not individually attractive or repelling, but
    together they form an attractive set.}
    \label{fig:cone}
\end{figure}

\begin{example}[Cone]
Figure~\ref{fig:cone} shows an example when $\Delta$ is a
three-dimensional cone.  In this case there is an attractive fixed
point $A$ at the top of the cone.  There is a circle $C$ in the middle
of the cone and each point in $C$ is a repelling fixed point.  We also
have a circle $B$ of fixed points at the base.  Any point above $C$
maps to $A$ under $T$ in a single step.  Similarly any point below $C$
maps to $B$ in a single step.  The map $T$ takes any point near $B$ to
$B$ so we can see $B$ as an attractive set.  However, the points in
$B$ are not individually attractive or repelling.
\end{example}

\subsection{Fixed Point Classification}
\label{sec:types}

Fixed point iteration defines a discrete dynamical system.  There are
several notions of stability for such systems and the notions we use
throughout the paper are defined below (see, e.g.,
\cite{Galor,Holmgren}).

\begin{definition}
A fixed point $x$ is \emph{attractive} if $\exists \epsilon > 0$ such
that $||x-x_0|| < \epsilon$ implies that iteration with $T$ starting
at $x_0$ converges to $x$.
\end{definition}

\begin{definition}
A fixed point $x$ is \emph{repelling} if $\exists \epsilon > 0$ such
that $||x-x_0|| < \epsilon$ and $x_0 \neq x$ implies there is an $n$
for which iteration with $T$ starting at $x_0$ leads to $x_n$ with
$||x_n-x|| > \epsilon$.
\end{definition}

Consider a two-dimensional region $\Delta$ with smooth boundary.  For
any fixed point $x$ the tangent at $x$ is perpendicular to $x$.
Therefore, the behavior of $T$ at a point $y$ sufficiently near $x$
depends only the curvature, $k(x)$, of the boundary at $x$.  If $k(x)
> 1/||x||$ then $x$ is attractive.  In this case the behavior of $T$
near $x$ is similar to the behavior of $T$ near the attractive fixed
point in the off-center disk in Example~\ref{ex:disk}.  On the other
hand, if $k(x) < 1/||x||$ then $x$ is repelling.  In this case the
behavior of $T$ near $x$ is similar to the behavior of $T$ near the
repelling fixed point in the off-center disk in Example~\ref{ex:disk}.

In higher dimensions the situation is more subtle because the the
boundary has multiple principal curvatures.  In particular a fixed
point may be behave like an attractive fixed point along one direction
and like a repelling fixed point along another.  A fixed point $x$ is
attractive if all curvatures at $x$ are greater than $1/||x||$.
Similarly a fixed point is repelling if all curvatures at $x$ are
smaller than $1/||x||$.

\section{Elliptope}
\label{sec:elliptope}

For the remainder of the paper we study the fixed points and the fixed
point iteration process in the special case of the elliptope, the
study of which is motivated by SDP relaxations of combinatorial
optimization problems (see, e.g., \cite{Goemans,Laurent}).  We first
prove an eigenvector-like relationship between points $x$ and $T(x)$.
Theorem~\ref{thm:mfp} builds on this relationship giving an algebraic
characterization of the fixed points in the elliptope.  In
Section~\ref{sec:elliptopeEX}, we fully illustrate all of the fixed
points in dimension $3$ and give an explicit construction of an
infinite family of fixed points in dimension $4$.  In terms of the
iteration process, we classify the attractive fixed points of the
elliptope as exactly its vertices in Theorem~\ref{thm:attractive}.
Finally, we discuss how fixed point iteration can be used to
approximately solve the closest vertex problem and to round the
solutions of SDP relaxations.

Let $\Sn  \subset \reals^{n \times n}$ be the set of $n$ by $n$ symmetric matrices. 

\begin{definition}
The \emph{elliptope}
$\Ln$ is the subset of matrices in $\Sn$ that are
positive semidefinite and have all $1$'s on the diagonal:
$$\Ln = \{ X \in \Sn \,|\, X \succcurlyeq 0, X_{i,i} =
1 \}.$$
\end{definition}

For a matrix $X \in {\cal L}_3$, $X$ has the form, 
$$X =
\begin{pmatrix}
1 & x & y \\
x & 1 & z \\
y & z & 1
\end{pmatrix}.$$
Therefore we can visualize $X$ as a point $(x,y,z) \in \mathbb{R}^3$.

Figure~\ref{fig:elliptope} shows the elliptope ${\cal L}_3$ and the
fixed points of $T$.  The red fixed points are irreducible matrices
with rank 1 and correspond to the vertices of the elliptope, the blue
points are irreducible matrices (see Definition~\ref{def:irreducible})
with rank 2 and the green points are reducible matrices with rank 2.
Example~\ref{ex:l3} in Section~\ref{sec:elliptopeEX} describes the
fixed points of ${\cal L}_3$ in more detail.

As seen in Figure~\ref{fig:elliptope}, the fixed points of ${\cal
  L}_3$ and their ranks are related to the geometry of the convex
body.  In general, the vertices of the elliptope are always fixed
points of $T$.  However, there are other fixed points that reflect
different geometrical structure.  The geometry of the elliptope and
the nature of the fixed points becomes more complex in higher
dimensions.  For example, while the number of fixed points is finite
for $n=3$, when $n>3$ there are already an infinite number of fixed
points.

\subsection{Fixed points in $\Ln$}
\label{sec:elliptopeFP}

It is well known that the matrices in $\Ln$ are precisely the Gram
matrices of $n$ unit vectors in $\reals^n$ (\cite{laurent1997}).  For
example, for the matrix $X$ above, there must exist vectors $v_1, v_2,
v_3 \in \mathbb{R}^3$ with $||v_i|| = 1$, such that $x = v_1^Tv_1$, $y
= v_2^Tv_2$, $z = {v_3}^T v_3$.  Considering the optimization defined
by $T(M)$ and using the Gram matrix representation for the matrices in
$\Ln$ we obtain the following result.

\begin{lemma}
  \label{lem:local}
  Let $M \in \Sn$ and $X=T(M)$.  
  
  Suppose $X$ is the Gram matrix of $n$ unit vectors $(v_1,\ldots,v_n)$.  Then,
  \begin{itemize}
  \item[(a)]There exists real values $\alpha_i$ such that 
  $$\sum_{j \neq i} M_{i,j} v_j = \alpha_i v_i.$$
  \item[(b)]The vectors $(v_1,\ldots,v_n)$ are linearly dependent and $\rank(X) < n$.
  \item[(c)] There exists a diagonal matrix $D$ such that,
    $$MX = DX.$$
  \end{itemize}
\end{lemma}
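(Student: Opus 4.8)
The plan is to phrase everything in terms of the Gram representation. Writing $X = V^{\mathsf T}V$ with $V = [\,v_1\mid\cdots\mid v_n\,]$, one computes $\langle M,V^{\mathsf T}V\rangle = \sum_{i,j}M_{ij}(v_i\cdot v_j) = \sum_{i\ne j}M_{ij}(v_i\cdot v_j) + \sum_i M_{ii}$, so maximizing $\langle M,Y\rangle$ over $Y\in\Ln$ is equivalent to maximizing $F(v_1,\dots,v_n) := \sum_{i\ne j}M_{ij}(v_i\cdot v_j)$ over all $n$-tuples $(v_1,\dots,v_n)$ of unit vectors. Since $F$ depends only on the inner products $v_i\cdot v_j$, \emph{every} Gram representation of $X=T(M)$ is a global maximizer of $F$, which is what lets us reason with the given representation $(v_1,\dots,v_n)$.

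For (a) I would use a block-coordinate (perturbation) argument. Fix every vector except $v_i$. Gathering the terms of $F$ that involve $v_i$ and using $M_{ij}=M_{ji}$ gives $F = 2\,v_i\cdot w_i + c_i$, where $w_i = \sum_{j\ne i}M_{ij}v_j$ and $c_i$ is independent of $v_i$. As a function of $v_i$ on the unit sphere this is affine-linear, so it is maximized exactly at $v_i = w_i/\|w_i\|$ when $w_i\ne 0$, and at every unit vector when $w_i=0$. Since $(v_1,\dots,v_n)$ maximizes $F$, the vector $v_i$ must already be a maximizer of this restricted problem, and in both cases this forces $w_i=\alpha_i v_i$ for a scalar $\alpha_i$ (namely $\alpha_i=\|w_i\|$, or $\alpha_i=0$). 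This is exactly the relation in (a).

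Parts (b) and (c) are then bookkeeping. Rewriting (a) as $\sum_j M_{ij}v_j = (\alpha_i+M_{ii})\,v_i$ and reading it column by column shows $VM = VD$ for the diagonal matrix $D$ with $D_{ii}=\alpha_i+M_{ii}$. Multiplying on the left by $V^{\mathsf T}$ gives $XM = V^{\mathsf T}VM = V^{\mathsf T}VD = XD$, and transposing (everything is symmetric) yields $MX=DX$, which is (c). For (b), the identity $-\alpha_i v_i + \sum_{j\ne i}M_{ij}v_j = 0$ is a nontrivial linear dependence among $v_1,\dots,v_n$ as long as some off-diagonal entry of $M$ is nonzero; hence the $v_i$ are linearly dependent and $\rank X = \rank(V^{\mathsf T}V) = \rank V < n$. (If $M$ is diagonal then $F\equiv 0$, so $T(M)=\Ln$ and the statement is degenerate; this is the only case that must be set aside.)

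The only real obstacle is part (a), and even that is mild once the objective is seen as $v_i\mapsto 2\,v_i\cdot w_i$ on a sphere; the two things to be careful about are that $F$ depends only on the inner products (so the argument is insensitive to the choice of Gram factorization) and the degenerate diagonal case in (b). As an alternative route, (c) and (b) also follow from SDP duality: the dual of $\max_{Y\in\Ln}\langle M,Y\rangle$ is $\min\,\mathbf 1^{\mathsf T}\lambda$ subject to $\mathrm{Diag}(\lambda)\succcurlyeq M$, strong duality holds by Slater's condition (take $Y=I$), and complementary slackness gives $Y^\star(\mathrm{Diag}(\lambda^\star)-M)=0$ --- i.e.\ (c) with $D=\mathrm{Diag}(\lambda^\star)$ --- while the rank bound $\rank Y^\star + \rank(\mathrm{Diag}(\lambda^\star)-M)\le n$ gives (b).
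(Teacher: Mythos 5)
Your proof is correct and follows essentially the same route as the paper's: parametrize points of $\Ln$ by Gram factorizations into unit vectors and derive a stationarity condition by varying one $v_i$ at a time. For (a), you maximize the linear function $v_i\mapsto 2\,v_i\cdot w_i$ directly on the unit sphere rather than invoking Lagrange multipliers on the constraint $\|u\|^2=1$ as the paper does; the conclusion $w_i=\alpha_i v_i$ is the same, and your phrasing has the small advantage of making $\alpha_i\ge 0$ immediate. Part (c) is the identical computation, just with $X=V^{\mathsf T}V$ (columns) instead of the paper's $X=VV^{\mathsf T}$ (rows). The genuine value-add is your caveat on (b): as literally stated the lemma fails when $M$ is diagonal, since then $M\cdot Y$ is constant on $\Ln$, $T(M)=\Ln$, and one may take $X=I$ with $\rank X = n$. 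The paper's proof quietly relies on ``if $\lambda_i=0$ then $\{v_j: j\ne i\}$ are linearly dependent,'' which is vacuous when row $i$ of $M$ has no nonzero off-diagonal entry; one must pick an index $i$ with some $M_{ij}\ne 0$, $j\ne i$, and exclude the fully diagonal case, exactly as you do. Your SDP-duality alternative for (b) and (c) is also sound and arguably cleaner: strong duality holds by Slater, complementary slackness $X\bigl(\mathrm{Diag}(\lambda^\star)-M\bigr)=0$ gives (c) directly, and $\rank X + \rank\bigl(\mathrm{Diag}(\lambda^\star)-M\bigr)\le n$ together with $\mathrm{Diag}(\lambda^\star)\ne M$ (automatic whenever $M$ is not diagonal) gives (b) without reference to any particular Gram factorization.
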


\begin{proof}
  For $n$ unit vectors $(u_1,\ldots,u_n)$ let $E(u_1,\ldots,u_n) = Y
  \cdot M$ where $Y$ is the Gram matrix of $(u_1,\ldots,u_n)$.  For a
  single unit vector $u$ let
  $$E_i(u) = E(v_1,\ldots,v_{i-1},u,v_{i+1},\ldots,v_n).$$
  Since $X=T(M)$ the unit vectors $(v_1,\ldots,v_n)$ maximize $E$.
  Therefore $v_i$ maximizes $E_i$.  Using the method of Lagrange
  multipliers to maximize $E_i(u)$ subject to $||u||^2=1$ we see that
  $$\nabla E_i(v_i) = \lambda_i \nabla ||v_i||^2 \Leftrightarrow 2 \sum_{j
    \neq i} M_{i,j} v_j = 2 \lambda_i v_i.$$ This proves part (a).

  For part (b) note that if $\lambda_i \neq 0$ then $v_i$ is in the
  span of $\{v_j \,|\, j \neq i\}$.  On the other hand, if $\lambda_i
  = 0$ then $\{v_j \,|\, j \neq i\}$ are linearly dependent.  Let $V$
  be the matrix with $v_i$ in the $i$-th row.  Since $X=VV^T$ we have
  $\rank(X) < n$.

  For part (c) let $D$ be the diagonal matrix where $D_{i,i} =
  \alpha_i+M_{i,i}$.  By part (a) we have $MV=DV$.  Multiplying by
  $V^T$ on both sides we obtain $MX=DX$.
\end{proof}

The relationship between $X=T(M)$ and $M$ defined by $MX = DX$ is
similar to the notion of an eigenvector.  A vector $v$ is an
eigenvector of $M$ with eigenvalue $\lambda$ if $Mv = \lambda v$.  The
condition $MX = DX$ is analogous but we have a matrix $X$ instead of a
vector $v$, and a diagonal matrix $D$ instead of a scalar $\lambda$.
Although this notion of an ``eigenmatrix'' is natural it does not seem
to appear in the literature before.

The next result is one direction of Theorem \ref{thm:mfp}.  We present
this result now because it will be used for some of the intermediate
results leading to the other direction.

\begin{proposition}
  Let $X$ be a fixed point of $T$.
  Then $$X^2 = DX$$ where $D$ is a diagonal matrix with,
  $$D_{i,i} = \sum_j (X_{i,j})^2 \ge 1.$$ 
\label{prop:fp}
\end{proposition}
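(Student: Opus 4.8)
The plan is to apply Lemma~\ref{lem:local} with $M = X$. By definition, a fixed point of $T$ on the elliptope is a matrix $X \in \Ln$ with $X \in T(X)$. Since every matrix in $\Ln$ is the Gram matrix of $n$ unit vectors $v_1,\ldots,v_n$, the hypothesis of Lemma~\ref{lem:local} holds for $M = X$, and part (c) of that lemma immediately produces a diagonal matrix $D$ with $X^2 = X\,X = DX$.

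It then remains only to read off the diagonal entries of $D$ and to verify the stated bound. Because $X_{i,i} = 1$, comparing $(i,i)$ entries in $X^2 = DX$ gives $D_{i,i} = D_{i,i} X_{i,i} = (DX)_{i,i} = (X^2)_{i,i} = \sum_j X_{i,j} X_{j,i} = \sum_j (X_{i,j})^2$, using that $X$ is symmetric. Since the diagonal term contributes $(X_{i,i})^2 = 1$ and every other term $(X_{i,j})^2$ is nonnegative, $D_{i,i} = \sum_j (X_{i,j})^2 \ge 1$.

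I do not expect a real obstacle here: all the work is inside Lemma~\ref{lem:local}, and the only thing one needs to notice is that a fixed point is precisely an $X \in \Ln$ with $X \in T(X)$, so the lemma applies with $M = X$. The one point worth stating carefully is that the matrix $D$ produced by Lemma~\ref{lem:local}(c) need not be uniquely determined by the equation $X^2 = DX$ alone, but its diagonal entries are forced by the entrywise comparison above (legitimate since $X_{i,i} = 1 \neq 0$), which is why the formula $D_{i,i} = \sum_j (X_{i,j})^2$ is unambiguous. As a consistency check, one can instead dot the relation $\sum_{j \neq i} X_{i,j} v_j = \alpha_i v_i$ from Lemma~\ref{lem:local}(a) with $v_i$, use $v_i \cdot v_j = X_{i,j}$ to get $\alpha_i = \sum_{j \neq i} (X_{i,j})^2$, and then combine with $D_{i,i} = \alpha_i + X_{i,i}$ from the proof of part (c) to recover the same value.
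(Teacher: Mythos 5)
Your proof is correct and follows essentially the same route as the paper: apply Lemma~\ref{lem:local}(c) with $M = X$ (legitimate because $X \in T(X)$ and every matrix in $\Ln$ is a Gram matrix of unit vectors), then read off the diagonal entries of $D$ using $X_{i,i}=1$ and symmetry. The extra remarks on uniqueness of $D$ and the cross-check via part (a) are sound but add nothing beyond what the paper's terse argument already establishes.
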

\begin{proof}
  Lemma~\ref{lem:local} implies $X^2=DX$.  Since $X \in \Ln$ we know
  $X_{i,i} = 1$.  Therefore
  $$D_{i,i} = (DX)_{i,i} = (X^2)_{i,i} = \sum_{j} (X_{i,j})^2 \ge (X_{i,i})^2 = 1.$$ 
\end{proof}

\begin{definition}
\label{def:irreducible}
A matrix $M \in \Sn$ is \emph{irreducible} if we cannot partition
$\{1,\ldots,n\}$ into two sets $A$ and $B$ with $M_{i,j} = 0$ whenever
$i \in A$ and $j \in B$.
\end{definition}

\begin{proposition}
Let $X \in \Ln$ be an irreducible matrix with $X^2=DX$.  Then
$D=\gamma I$ with $\gamma \ge 1$.
\label{obs:irred}
\end{proposition}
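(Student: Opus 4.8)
The plan is to exploit the symmetry of $X$ together with the identity $X^2 = DX$ to force the diagonal entries of $D$ to agree on every pair $(i,j)$ with $X_{i,j} \neq 0$, and then use irreducibility to propagate this to all pairs.

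First I would observe that since $X$ is symmetric, so is $X^2$, hence $(X^2)_{i,j} = (X^2)_{j,i}$ for all $i,j$. Expanding both sides with $X^2 = DX$ gives $D_{i,i} X_{i,j} = (X^2)_{i,j} = (X^2)_{j,i} = D_{j,j} X_{j,i} = D_{j,j} X_{i,j}$, so that
$$(D_{i,i} - D_{j,j})\, X_{i,j} = 0 \qquad \text{for all } i,j.$$
In particular $D_{i,i} = D_{j,j}$ whenever $X_{i,j} \neq 0$. This is the crucial step; the rest is bookkeeping.

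Next I would translate irreducibility into connectedness of the graph $G$ on vertex set $\{1,\ldots,n\}$ with an edge between $i$ and $j$ exactly when $X_{i,j} \neq 0$. By Definition~\ref{def:irreducible}, $X$ is irreducible precisely when $G$ is connected: a partition $\{1,\ldots,n\} = A \sqcup B$ into nonempty parts with $X_{i,j} = 0$ for $i \in A$, $j \in B$ (and, by symmetry of $X$, also for $i \in B$, $j \in A$) is exactly a partition of $G$ with no edges across it. Since the map $i \mapsto D_{i,i}$ is constant along every edge of $G$ and $G$ is connected, $D_{i,i}$ takes a single value $\gamma$, i.e. $D = \gamma I$.

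Finally, to get $\gamma \ge 1$, I would read off a diagonal entry of $X^2 = DX$: using $X_{i,i} = 1$,
$$\gamma = D_{i,i} = (DX)_{i,i} = (X^2)_{i,i} = \sum_{j} (X_{i,j})^2 \ge (X_{i,i})^2 = 1,$$
which is precisely the bound already recorded in Proposition~\ref{prop:fp}. I do not expect a genuine obstacle here; the only point needing a moment of care is checking that the combinatorial definition of irreducibility coincides with connectedness of $G$ — in particular that the two parts of the partition are nonempty — which is immediate.
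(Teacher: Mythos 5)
Your proof is correct and follows essentially the same route as the paper's: both use the symmetry of $X^2$ together with $X^2 = DX$ to conclude $D_{i,i} = D_{j,j}$ whenever $X_{i,j} \neq 0$, propagate this by irreducibility, and then read off $\gamma = \sum_j (X_{i,j})^2 \ge 1$ from a diagonal entry. The only cosmetic differences are that the paper divides by $D_{i,i}, D_{j,j}$ (after first noting $D_{i,i} \ge 1$) where you multiply through and thereby avoid needing $D_{i,i} \neq 0$ up front, and that you spell out the graph-connectedness interpretation of irreducibility which the paper leaves implicit.
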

\begin{proof}
As in Proposition~\ref{prop:fp} we know $D_{i,i} \ge 1$.  For $i \neq
j$,
$$\frac{1}{D_{i,i}} \sum_{l=1}^n X_{i,l} X_{l,j} = X_{i,j} = X_{j,i} = 
\frac{1}{D_{j,j}} \sum_{l=1}^n X_{j,l} X_{l,i}.$$
If $X_{j,i} = X_{i,j} \neq 0$ then $D_{i,i} = D_{j,j}$.  Since $X$
is irreducible this implies $D = \gamma I$. 
\end{proof}

As an example consider the irreducible matrix $X$ and diagonal matrix $D$,
$$X = \begin{pmatrix*}[r]
1 & -1 & -1 \\
-1 & 1 & 1 \\
-1 & 1 & 1
\end{pmatrix*},\;
D = \begin{pmatrix*}[r]
\frac{1}{3} & 0 & 0 \\
0 & \frac{1}{3} & 0 \\
0 & 0 & \frac{1}{3}
\end{pmatrix*}.$$
In this case $X^2=DX$ and $D=\frac{1}{3}I$.

Now consider the reducible matrix $X$ and diagonal matrix $D$ below,
$$X = \begin{pmatrix*}[r]
1 & 0 & 0 \\
0 & 1 & 1 \\
0 & 1 & 1
\end{pmatrix*},\;
D = \begin{pmatrix*}[r]
1 & 0 & 0 \\
0 & \frac{1}{2} & 0 \\
0 & 0 & \frac{1}{2}
\end{pmatrix*}.$$
In this case $X^2=DX$ but $D\neq \gamma I$ for any $\gamma$.

To characterize the fixed points of $T$ we use a result from
\cite{Laurent} about the normal cones in $\Ln$.

\begin{proposition}[Proposition 2.3 in \cite{Laurent}]
A matrix $Y$ is in the normal cone of $\Ln$ at $X$ if and only if
$Y=D-M$ where $D$ is a diagonal matrix and $X \cdot M = 0$
\label{prop:laurent}
\end{proposition}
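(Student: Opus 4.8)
The plan is to prove the two directions of the equivalence separately, reading the condition on $M$ as "$M \succcurlyeq 0$ and $X \cdot M = 0$" (this is Proposition~2.3 of \cite{Laurent}; without positive semidefiniteness the statement is vacuous, since for \emph{any} $Y \in \Sn$ one could take $D = \tfrac{1}{n}(Y \cdot X)\, I$ and $M = D - Y$, which already satisfies $X \cdot M = \mathrm{tr}(D) - Y\cdot X = 0$).

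First, the sufficiency direction is a direct computation. Suppose $Y = D - M$ with $D$ diagonal, $M \succcurlyeq 0$, and $X \cdot M = 0$. For any $Z \in \Ln$ I would write $Y \cdot (X - Z) = D \cdot (X-Z) - M \cdot (X-Z)$. Since $X$ and $Z$ both have unit diagonal, $X - Z$ has zero diagonal, so $D \cdot (X - Z) = \sum_i D_{i,i}(X_{i,i}-Z_{i,i}) = 0$; and $M \cdot (X - Z) = -\, M \cdot Z$ because $X \cdot M = 0$, while $M \cdot Z = \mathrm{tr}(MZ) \ge 0$ since $M, Z \succcurlyeq 0$. Hence $Y \cdot (X - Z) = M \cdot Z \ge 0$, i.e. $Y \cdot X \ge Y \cdot Z$ for all $Z \in \Ln$, so $Y \in N(\Ln, X)$.

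For the necessity direction I would use semidefinite duality. Saying $Y \in N(\Ln, X)$ is exactly saying that $X$ is an optimal solution of the SDP $\max\{ Y \cdot Z \,:\, Z \succcurlyeq 0,\ Z_{i,i} = 1\ \forall i\}$. Its Lagrangian dual, with a scalar multiplier $d_i$ for each constraint $Z_{i,i}=1$, is $\min\{ \sum_i d_i \,:\, D - Y \succcurlyeq 0\}$ over diagonal $D = \mathrm{diag}(d_1,\dots,d_n)$. The primal is strictly feasible (the identity $I$ lies in $\Ln$ and is positive definite), so Slater's condition holds, strong duality applies, and the dual optimum is attained at some diagonal $D$. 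Set $M = D - Y$. Dual feasibility gives $M \succcurlyeq 0$, and equality of the optimal values, $Y \cdot X = \sum_i d_i = \mathrm{tr}(D) = \mathrm{tr}(DX) = D \cdot X$ (using that $X$ has unit diagonal), gives $X \cdot M = D \cdot X - Y \cdot X = 0$. Thus $Y = D - M$ has the required form. Alternatively, the decomposition follows from the normal-cone sum rule $N(\Ln, X) = N(P, X) + N(A, X)$ for the intersection of the PSD cone $P = \{Z \in \Sn : Z \succcurlyeq 0\}$ with the affine set $A = \{Z \in \Sn : Z_{i,i} = 1\}$, valid because $I \in \mathrm{relint}(P) \cap A$: there $N(A,X)$ is the space of diagonal matrices, and by self-duality of the PSD cone $N(P,X) = \{-M : M \succcurlyeq 0,\ X \cdot M = 0\}$.

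The one place where something nontrivial enters is the constraint qualification in the necessity direction — strong SDP duality, or equivalently the normal-cone sum rule — which rests on the strict feasibility of $I \in \Ln$. Everything else is routine, including the complementary-slackness identity $X \cdot M = 0$ with $X, M \succcurlyeq 0$, which in fact forces $XM = 0$.
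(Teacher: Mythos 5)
The paper does not actually prove this proposition; it is cited directly from Laurent's work, so there is no ``paper's proof'' to compare against. Your proof is correct and, indeed, it is essentially the standard argument: the sufficiency direction by a one-line computation using complementary slackness, and the necessity direction by strong SDP duality under Slater's condition (or, equivalently, the normal-cone sum rule for the intersection of the PSD cone with the affine diagonal constraints, using the self-duality of the PSD cone). Both routes you sketch are valid, and the observation that they are two faces of the same fact is apt.

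Your opening remark is a genuine catch: as literally stated the proposition omits the hypothesis $M \succcurlyeq 0$, and without it the condition is vacuous, as your construction $D = \tfrac{1}{n}(Y\cdot X)I$, $M = D - Y$ shows. That the positive-semidefiniteness of $M$ is intended is confirmed by the sentence immediately following the proposition in the paper, which rewrites $M$ as $\sum_j w(j)w(j)^\top$ with $w(j) \in \ker(X)$ --- a representation that only characterizes PSD matrices with $X \cdot M = 0$. So you have correctly reconstructed the intended statement and supplied a clean self-contained proof of it.
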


Note that the condition $X \cdot M = 0$ is equivalent to
$M=\sum_{j=1}^p w(j)w(j)^\top$ with $w(j) \in \ker(X)$.

\begin{lemma}
Let $X$ be an irreducible matrix in $\Ln$ with rank $s$. Then $X$ is a
fixed point of $T$ if and only if $X$ can be written as
$$X = \frac{n}{s} \sum_{i=1}^s v(i)v(i)^\top,$$ where
$\{v(1),\ldots,v(s)\}$ is an orthonormal set of vectors such that for
all $j = 1,\ldots,n$
$$\frac{n}{s} \sum_{i=1}^s v(i)_j ^2 = 1.$$
\label{lem:ifp}
\end{lemma}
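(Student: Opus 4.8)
The plan is to prove both implications by reducing to the eigenmatrix relation $X^2 = \gamma X$ and then reading off the spectral decomposition of $X$.

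For the forward direction, suppose $X$ is a fixed point of $T$ with $\rank(X)=s$. Proposition~\ref{prop:fp} gives $X^2 = DX$ for a diagonal matrix $D$, and since $X$ is irreducible Proposition~\ref{obs:irred} strengthens this to $X^2 = \gamma X$ with $\gamma \ge 1$. Because $X$ is symmetric positive semidefinite, I would pass to its spectral decomposition and observe that every nonzero eigenvalue $\mu$ must satisfy $\mu^2 = \gamma\mu$, hence $\mu = \gamma$; thus $X = \gamma P$ where $P$ is the orthogonal projection onto the $s$-dimensional column space of $X$. Choosing an orthonormal basis $v(1),\dots,v(s)$ of that column space, $P = \sum_{i=1}^s v(i)v(i)^\top$, so $X = \gamma\sum_{i=1}^s v(i)v(i)^\top$. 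Taking traces, $\gamma s = \operatorname{tr}(X) = \sum_j X_{j,j} = n$, so $\gamma = n/s$; and the elliptope constraint $X_{j,j}=1$ becomes precisely $\frac{n}{s}\sum_{i=1}^s v(i)_j^2 = 1$ for every $j$.

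For the converse, assume $X = \frac{n}{s}\sum_{i=1}^s v(i)v(i)^\top$ with $\{v(i)\}$ orthonormal and $\frac{n}{s}\sum_i v(i)_j^2 = 1$. Then $X$ is symmetric, positive semidefinite as a nonnegative combination of rank-one projections, and has unit diagonal by the normalization hypothesis, so $X \in \Ln$. Orthonormality gives $v(i)^\top v(j) = \delta_{ij}$, whence $X^2 = \frac{n}{s}X$. Setting $\gamma = n/s$, $D = \gamma I$ and $M = D - X$, a trace computation gives $X \cdot M = \gamma\operatorname{tr}(X) - \operatorname{tr}(X^2) = \gamma n - \gamma n = 0$. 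Since $X = D - M$ with $D$ diagonal and $X\cdot M = 0$, Proposition~\ref{prop:laurent} shows $X$ lies in $N(\Ln,X)$, i.e.\ $X$ is a fixed point of $T$. As a consistency check, $M = \gamma(I-P)$ is $\gamma$ times the projection onto $\ker(X)$, matching the remark after Proposition~\ref{prop:laurent}.

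The arithmetic here is light, so the only real subtlety is the role of irreducibility. It is essential in the forward direction: without it Proposition~\ref{obs:irred} is unavailable, $D$ need only be constant on the diagonal blocks determined by the reducibility partition, and $X$ would split as a direct sum of differently scaled projections rather than the single $\frac{n}{s}$-scaled projection in the statement. Conversely, a matrix of the stated form need not itself be irreducible, but that causes no problem since irreducibility is a standing hypothesis of the lemma rather than something to be derived. I therefore expect the write-up to be short, with the spectral step — the nonzero eigenvalues are forced to coincide, so $X$ is a scaled projection — as its only genuine idea.
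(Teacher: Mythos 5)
Your proof is correct, and the forward direction takes a different (and arguably tidier) route than the paper's. The paper decomposes $X = D - M$ directly from the normal-cone criterion of Proposition~\ref{prop:laurent}, observes $\lambda_i v(i) = D v(i)$, and then invokes irreducibility to force $D = \lambda I$. You instead obtain $X^2 = DX$ from Proposition~\ref{prop:fp}, collapse $D$ to $\gamma I$ via Proposition~\ref{obs:irred}, and read off the spectral conclusion: every nonzero eigenvalue satisfies $\mu^2 = \gamma\mu$, hence equals $\gamma$, which the trace fixes at $n/s$. Both routes hinge on irreducibility to make the diagonal matrix a scalar multiple of $I$; yours simply re-uses the eigenmatrix machinery already packaged in Propositions~\ref{prop:fp} and~\ref{obs:irred} (both of which precede this lemma, so there is no circularity), while the paper returns to the normal-cone picture from scratch. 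What your route buys is a shorter argument; what the paper's buys is uniformity, since both directions then rest entirely on Proposition~\ref{prop:laurent}. The converse direction is essentially the paper's, with a trace identity in place of the paper's explicit spectral decomposition of $M$.

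One caution: you describe the observation that $M = \gamma(I - P) \succcurlyeq 0$ as a ``consistency check,'' but the remark following Proposition~\ref{prop:laurent} makes clear that $M$ is required to be expressible as $\sum_j w(j)w(j)^\top$ with $w(j) \in \ker(X)$ --- that is, positive semidefinite with range in the kernel of $X$ --- and the proposition is false if this is dropped (e.g.\ for $X = I$ the normal cone is just the diagonal matrices, yet any $Y$ admits a decomposition $D - M$ with $\operatorname{tr}(M)=0$ if $M$ may be indefinite). So verifying $M \succcurlyeq 0$, which your projection observation does, is part of applying the proposition rather than a bonus. Treat it as load-bearing and the proof stands.
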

\begin{proof}
First, suppose that $X$ is a fixed point. Let $\{v(1),\ldots,v(s)\}$
be an orthonormal set of eigenvectors for $X$. Since $X$ is symmetric,
we can write $X$ as
$$X = \sum_{i=1}^s \lambda_i v(i)v(i)^\top$$ where $\lambda_i > 0$ is
the eigenvalue associated with the eigenvector $v(i)$. Further, since
$X$ is a fixed point, using Propositon~\ref{prop:laurent}, we can
write $X = D-M$, where $D$ is a diagonal matrix and $M = \sum_{j=1}^p
w(j) w(j)^\top$ with $w(j) \in \mathrm{ker}(X)$. Thus, for all $i =
1,\ldots,s$
$$\lambda_i v(i) = X v(i) = (D-M)v(i) = D v(i).$$ This shows that
$D_{j,j} = \lambda_i$ for all $j$ such that $v(i)_j \neq 0$. Since $X$
is irreducible, this implies that $\lambda_i = \lambda$ for all
$i=1,\ldots,s$ and $D = \lambda I$.  Further, for all $j = 1,\ldots,n$
$$1 = X_{j,j} = \lambda \sum_{i=1}^s v(i)_j^2.$$ Summing over all $j$,
yields $n = \lambda s$ or $\lambda = n/s$. This completes the first
direction of the proof.

Now suppose that $X$ can be written as
$$X = \frac{n}{s} \sum_{i=1}^s v(i)v(i)^\top,$$ as above.  Consider
the matrix $M = \frac{n}{s} I - X$. Since $M$ is symmetric, we can
write $M = \sum_{j=1}^p \alpha_j w(j) w(j)^\top$, where
$\{w(1),\ldots,w(p)\}$ is a set of orthonormal eigenvectors of $M$ and
$\alpha_j$ is the eigenvalue associated with $w(j)$.  For all
$j=1,\ldots,p$
$$\alpha_j w(j) = M w(j) = \frac{n}{s} w(j) - X w(j).$$ Thus, $w(j)$
is an eigenvector of $X$ as well. This shows that either $\alpha_j =
n/s$ and $w(j)$ is in the kernel of $X$ or $\alpha_j = 0$ (and we can
remove these vectors from the sum defining $M$).  Now
Proposition~\ref{prop:laurent} implies $X$ is a fixed point.
\end{proof}

We now prove our main result of this Section which characterizes the
set of fixed points in $\Ln$.

\begin{theorem}
  Let $X$ be a matrix in $\Ln$.  Then $X$ is a fixed point of $T$ if
  and only if $$X^2=DX$$ where $D$ is a diagonal matrix.
\label{thm:mfp}
\end{theorem}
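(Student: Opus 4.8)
The plan is to prove both directions. One direction is already done: Proposition~\ref{prop:fp} shows that every fixed point $X$ satisfies $X^2 = DX$ for a diagonal matrix $D$, so only the converse requires work. So suppose $X \in \Ln$ satisfies $X^2 = DX$ for some diagonal matrix $D$; I want to conclude $X$ is a fixed point of $T$. The natural strategy is to decompose $X$ into its irreducible blocks and reduce to the irreducible case, where Lemma~\ref{lem:ifp} is available.

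First I would handle the \textbf{irreducible case}. If $X$ is irreducible and $X^2 = DX$, then Proposition~\ref{obs:irred} gives $D = \gamma I$ with $\gamma \ge 1$. Writing the spectral decomposition $X = \sum_{i=1}^s \lambda_i v(i)v(i)^\top$ with $\{v(i)\}$ orthonormal and $\lambda_i > 0$, the equation $X^2 = \gamma X$ forces $\lambda_i^2 = \gamma \lambda_i$, hence $\lambda_i = \gamma$ for every $i$; so $X = \gamma \sum_i v(i)v(i)^\top$. Taking the $(j,j)$ entry gives $1 = \gamma \sum_i v(i)_j^2$, and summing over $j$ gives $n = \gamma s$, i.e. $\gamma = n/s$. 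This is exactly the form in Lemma~\ref{lem:ifp}, so $X$ is a fixed point.

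Next, the \textbf{reducible case}. Permuting indices, write $\{1,\dots,n\}$ as a disjoint union of blocks $B_1,\dots,B_k$ so that $X$ is block-diagonal with irreducible diagonal blocks $X^{(\ell)}$ on index set $B_\ell$ (the zero pattern of $X$ decomposes accordingly because $X \succcurlyeq 0$ with unit diagonal means a zero off-diagonal block really is a zero block). Since $D$ is diagonal, $X^2 = DX$ restricts to $(X^{(\ell)})^2 = D^{(\ell)} X^{(\ell)}$ on each block, where $D^{(\ell)}$ is the corresponding diagonal sub-block; and each $X^{(\ell)}$ lies in the smaller elliptope $\mathcal{L}_{|B_\ell|}$ and is irreducible, so by the irreducible case each $X^{(\ell)}$ is a fixed point of $T$ over $\mathcal{L}_{|B_\ell|}$. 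It then remains to check that a block-diagonal matrix whose blocks are fixed points is itself a fixed point — equivalently, via Proposition~\ref{prop:laurent}, that $X \in N(\Ln, X)$. Concretely, $N(\Ln,X)$ asks for a diagonal matrix $D'$ with $D' - X \succcurlyeq 0$ and $X\cdot(D'-X)=0$, i.e. $\mathrm{range}(D'-X)\subseteq \ker(X)$; using $X^2=DX$, the choice $D'=D$ works since $D-X$ is positive semidefinite blockwise (each $D^{(\ell)}-X^{(\ell)} = \frac{n_\ell}{s_\ell}I - X^{(\ell)}\succcurlyeq 0$) and $X(D-X)=XD-X^2 = XD-DX$; I would verify $XD=DX$ holds because $D$ is constant ($=n_\ell/s_\ell \cdot I$) on each irreducible block and $X$ is block-diagonal, so $D$ commutes with $X$, giving $X(D-X)=0$.

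The \textbf{main obstacle} I anticipate is the bookkeeping in the reducible case: justifying cleanly that the zero-pattern of a PSD matrix with unit diagonal forces an honest block decomposition into irreducible blocks (so that the restriction of $X^2=DX$ really does decouple), and then assembling the blockwise fixed-point certificates from Proposition~\ref{prop:laurent} into a global one. The spectral/eigenvalue manipulation in the irreducible case is routine once Proposition~\ref{obs:irred} is invoked; the care is all in the reduction and in confirming $D$ commutes with $X$ so that $X \cdot (D - X) = 0$.
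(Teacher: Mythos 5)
Your proposal is correct and follows essentially the same route as the paper: the irreducible case is handled identically (Proposition~\ref{obs:irred} gives $D=\gamma I$, the spectral argument forces $\lambda_i=\gamma=n/s$, and Lemma~\ref{lem:ifp} closes it), and the reducible case uses the same decomposition of $X$ into irreducible diagonal blocks followed by an appeal to Proposition~\ref{prop:laurent}. The only variation is in the assembly step — the paper glues per-block certificates $L(i)-M(i)$ into a global one, while you directly exhibit the single global certificate $M=D-X$ — and the commuting check $XD=DX$ you flag as a potential obstacle is in fact immediate since $D$ restricts to a scalar matrix on each block, so there is no gap.
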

\begin{proof}
First consider the case where $X$ is an irreducible irreducible matrix
in $\mathcal{L}_n$. Then we claim that $X$ is a fixed point if and
only if $X^2=DX$ where $D$ is a diagonal matrix.
 
When $X$ is a fixed point Proposition~\ref{prop:fp} implies $X^2 =
DX$.

Now suppose $X^2 = DX$.  Since $X$ is irreducible $D=\gamma I$ with
$\gamma \ge 1$.  Let $\{v(1),,\ldots,v(s)\}$ be an orthonormal set of
eigenvectors for $X$. Since $X$ is symmetric, we can write $X$ as
$$X = \sum_{i}^s \lambda_i v(i)v(i)^\top$$
where $\lambda_i > 0$ is the eigenvalue associated with the eigenvector $v(i)$. For all $i = 1,2, \ldots, s$
$$\lambda_i v(i)  = X v(i) = \frac{1}{\gamma} X^2 v(i) = \frac{\lambda_i^2}{\gamma} v(i).$$
Thus, $\lambda_i = \lambda$ for all $i$ with $\lambda = \gamma$.  
For $j = 1, 2, \ldots, n$ 
$$1 = X_{j,j} = \lambda \sum_{i=1}^s v(i)_j^2.$$
Summing over all $j$, yields $n = \lambda s$ or $\lambda = n/s$.  Now Lemma~\ref{lem:ifp}
implies $X$ is a fixed point.

Next suppose $X$ is a fixed point that does not necessarily correspond
to an irreducible matrix.  Then $X^2=DX$ by Proposition~\ref{prop:fp}.

Now suppose $X^2=DX$.  For $A \subseteq \{1,\ldots,n\}$ and $M \in
\Sn$ let $M|_A$ be the submatrix of $M$ indexed by the rows and
columns in $A$.  Let $G=(V,E)$ be a graph with $V=\{1,\ldots,n\}$ and
$E =\{\{i,j\} \,|\, X_{i,j} \neq 0\}$.  Let $\{A_1,\ldots,A_k\}$ be
the connected components of $G$.  Then each submatrix $X|_{A_i}$ is
irreducible and $X_{r,s} = 0$ if $r \in A_i$ and $s \in A_j$ with $i
\neq j$.  Each irreducible block of $X$ is square, symmetric, positive
semidefinite, and has $1$'s in the diagonal.  Therefore $X|_{A_i} \in
{\mathcal L}_{|A(i)|}$.

Since $X^2=DX$ we have $(X|_{A_i})^2 = (D|_{A_i}) (X|_{A_i})$.  Since
$X|_{A_i}$ is irreducible, $X|_{A_i}$ is a fixed point in ${\mathcal
  L}_{|A_i|}$.  We can use Proposition~\ref{prop:laurent} to write
$X|_{A_i} = L(i) - M(i)$ where $L(i)$ is diagonal and $X|_{A_i} \cdot
M(i) = 0$.  Let $L$ be the $n \times n$ matrix with $L|_{A_i} = L(i)$
and zeros in other entries.  Let $M$ be the $n \times n$ matrix with
$M|_{A_i} = M(i)$ and zeros in other entries.  Then $X = L - M$ with
$L$ diagonal and $X \cdot M = 0$.  Now Proposition~\ref{prop:laurent}
implies $X$ is a fixed point.
\end{proof}

\subsection{Examples}
\label{sec:elliptopeEX}

Here we consider two examples that illustrate the fixed points of $T$
in elliptopes of different dimensions.  Example~\ref{ex:l3} describes
all of the fixed points in ${\cal L}_3$.

\begin{example}
\label{ex:l3}
Figure~\ref{fig:elliptope} illustrates the fixed points in ${\cal
  L}_3$.  In this case there are finitely many fixed points.  There
are 4 irreducible fixed points with rank 1 corresponding to the
vertices of ${\cal L}_3$ (shown as red points in
Figure~\ref{fig:elliptope}).  The corresponding matrices are:
$$\begin{pmatrix*}[r]
1 & -1 & -1 \\
-1 & 1 & 1 \\
-1 & 1 & 1
\end{pmatrix*},
\begin{pmatrix*}[r]
1 & -1 & 1 \\
-1 & 1 & -1 \\
1 & -1 & 1
\end{pmatrix*},
\begin{pmatrix*}[r]
1 & 1 & -1 \\
1 & 1 & -1 \\
-1 & -1 & 1
\end{pmatrix*},
\begin{pmatrix*}[r]
1 & 1 & 1 \\
1 & 1 & 1 \\
1 & 1 & 1
\end{pmatrix*}.$$
There are 6 reducible fixed points with rank 2 (shown as green points
in Figure~\ref{fig:elliptope}).  Each of these fixed points is the
average of two vertices and appear along an ``edge'' of ${\cal L}_3$.
The corresponding matrices are:
$$\begin{pmatrix*}[r]
1 & 1 & 0 \\
1 & 1 & 0 \\
0 & 0 & 1
\end{pmatrix*},
\begin{pmatrix*}[r]
1 & -1 & 0 \\
-1 & 1 & 0 \\
0 & 0 & 1
\end{pmatrix*},
\begin{pmatrix*}[r]
1 & 0 & 1 \\
0 & 1 & 0 \\
1 & 0 & 1
\end{pmatrix*},
\begin{pmatrix*}[r]
1 & 0 & -1 \\
0 & 1 & 0 \\
-1 & 0 & 1
\end{pmatrix*},
\begin{pmatrix*}[r]
1 & 0 & 0 \\
0 & 1 & 1 \\
0 & 1 & 1
\end{pmatrix*},
\begin{pmatrix*}[r]
1 & 0 & 0 \\
0 & 1 & -1 \\
0 & -1 & 1
\end{pmatrix*}.$$
Finally, there are 4 irreducible fixed points with rank 2, one for
each ``puffed face'' in ${\cal L}_3$ (shown as blue points in
Figure~\ref{fig:elliptope}).  Each of these fixed points equals $T(M)$
for a matrix $M$ that is the average of 3 vertices, the average itself
does not lie on the boundary of ${\cal L}_3$.  The corresponding
matrices are:
$$\begin{pmatrix*}[r]
1 & -\frac{1}{2} & -\frac{1}{2} \\
-\frac{1}{2} & 1 & -\frac{1}{2} \\
-\frac{1}{2} & -\frac{1}{2} & 1
\end{pmatrix*},
\begin{pmatrix*}[r]
1 & -\frac{1}{2} & \frac{1}{2} \\
-\frac{1}{2} & 1 & \frac{1}{2} \\
\frac{1}{2} & \frac{1}{2} & 1
\end{pmatrix*},
\begin{pmatrix*}[r]
1 & \frac{1}{2} & -\frac{1}{2} \\
\frac{1}{2} & 1 & \frac{1}{2} \\
-\frac{1}{2} & \frac{1}{2} & 1
\end{pmatrix*},
\begin{pmatrix*}[r]
1 & \frac{1}{2} & \frac{1}{2} \\
\frac{1}{2} & 1 & -\frac{1}{2} \\
\frac{1}{2} & -\frac{1}{2} & 1
\end{pmatrix*}.$$
\end{example}

While there are a finite number of fixed points in ${\cal L}_3$, there
is an infinite set of fixed points in $\Ln$ for $n > 3$.
Example~\ref{ex:l4} illustrates an infinite family of fixed points in
${\cal L}_4$.

\begin{example}
\label{ex:l4}
In ${\cal L}_4$, any value $-1 < c < 1$ leads to a distinct fixed
point,
$$X(c) = \begin{pmatrix*}[r]
1 & -\sqrt{1-c^2} & 0 & c \\
-\sqrt{1-c^2} & 1 & -c & 0 \\
0 & -c & 1 & -\sqrt{1-c^2} \\
c & 0 & -\sqrt{1-c^2} & 1 
\end{pmatrix*}.$$
In this case we have $X(c)=\frac{1}{2}X(c)^2$.
\end{example}

Although there can be an infinite number of fixed points in
$\mathcal{L}_n$, there can only be a finite number of regular points
which are fixed points, where a point is regular (or smooth) if it has
a one-dimensional normal cone \cite{Laurent}.

\begin{lemma}
  In $\Ln$ there is a finite number of regular points that are also
  fixed points.
\end{lemma}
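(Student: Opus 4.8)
The plan is to first identify the regular points of $\Ln$ as exactly the matrices of rank $n-1$, and then to show there are only finitely many fixed points of that rank. For the first step I would use Proposition~\ref{prop:laurent}, which says $N(\Ln,X)=\{D-M : D \text{ diagonal},\, X\cdot M = 0\}$, where the matrices $M$ with $X\cdot M = 0$ are the symmetric matrices with column space contained in $\ker X$. Since $\Ln$ lies in the affine space of unit-diagonal matrices, whose orthogonal complement is the space of diagonal matrices, the relevant (relative) normal cone is the image of $N(\Ln,X)$ under orthogonal projection onto hollow (zero-diagonal) symmetric matrices, i.e. the image of $\{-M : X\cdot M = 0\}$ under that projection. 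Because $X_{i,i}=1$ forces $e_i\notin\ker X$ for every $i$, the only diagonal matrix with column space in $\ker X$ is $0$, so this projection is injective on $\{M : X\cdot M = 0\}$; hence the relative normal cone at $X$ has dimension exactly $\binom{n-\rank X+1}{2}$. This equals $1$ precisely when $\rank X = n-1$ (and in that case the single kernel direction $w$ has at least two nonzero coordinates, again since $e_i\notin\ker X$, so the hollow part of $ww^\top$ is nonzero and the cone is a genuine ray). Thus a point of $\Ln$ is regular exactly when it has rank $n-1$, and it suffices to bound the number of rank-$(n-1)$ fixed points.

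For that I would reduce to the irreducible case using the block decomposition from the proof of Theorem~\ref{thm:mfp}. Let $X$ be a rank-$(n-1)$ fixed point and write it in terms of its irreducible blocks $X|_{A_1},\ldots,X|_{A_k}$, each an irreducible fixed point of $T$ in $\mathcal{L}_{|A_l|}$ (with $(X|_{A_l})^2=(D|_{A_l})(X|_{A_l})$). Since $\sum_l \rank(X|_{A_l}) = \rank X = n-1 = \sum_l |A_l| - 1$ and each term $|A_l|-\rank(X|_{A_l})$ is nonnegative, exactly one block has corank $1$ and every other block is full rank. A full-rank fixed point $Y$ in $\mathcal{L}_m$ satisfies $Y=D$ by Proposition~\ref{prop:fp} (invertibility), hence $Y=I$, which is irreducible only for $m=1$. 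So $X$ consists of a single irreducible corank-$1$ block $B\in\mathcal{L}_m$ ($2\le m\le n$) together with $1$'s on the remaining diagonal, and there are only finitely many ways to choose the index set on which $B$ sits.

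It then remains to count the irreducible corank-$1$ fixed points in each $\mathcal{L}_m$. By Proposition~\ref{obs:irred} such a $B$ has $B^2=\gamma B$ with $\gamma I$ diagonal, and by the argument of Lemma~\ref{lem:ifp}, $B=\frac{m}{m-1}\sum_{i=1}^{m-1}v(i)v(i)^\top$ for an orthonormal set $\{v(1),\ldots,v(m-1)\}$ with $\frac{m}{m-1}\sum_{i=1}^{m-1}v(i)_j^2=1$ for all $j$. Completing this to an orthonormal basis of $\reals^m$ by a unit vector $w$ spanning $\ker B$, the condition becomes $w_j^2 = 1/m$ for all $j$, i.e. $w\in\{\pm 1/\sqrt m\}^m$; hence $B=\frac{m}{m-1}(I-ww^\top)$ and there are at most $2^{m-1}$ possibilities. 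Combining with the finitely many placements of $B$ gives a finite bound (at most $\sum_{m=2}^n\binom{n}{m}2^{m-1}$), proving the lemma; the same reduction can equivalently be organized as an induction on $n$.

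The main obstacle I anticipate is the first step: establishing that ``regular'' means ``rank $n-1$'' via the normal-cone dimension count, where one must be careful that the normal cone is taken relative to the affine hull of $\Ln$ and that projecting $N(\Ln,X)$ onto hollow matrices does not collapse the cone. Once that is in place, the reduction to irreducible blocks and the final count are essentially bookkeeping built on Theorem~\ref{thm:mfp}, Proposition~\ref{obs:irred}, and Lemma~\ref{lem:ifp}.
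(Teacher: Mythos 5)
Your proof is correct, but it takes a genuinely different route from the paper. The paper argues directly from Proposition~\ref{prop:laurent}: since a regular point has a one-dimensional (relative) normal cone, $\ker X$ is spanned by a single unit vector $w$, and the fixed-point condition $X \in N(\Ln,X)$ lets one write $X = D - \alpha w w^\top$. Then the two expressions for the diagonal of $\alpha w w^\top$ (namely $\alpha w_i^2$ and $D_{i,i}-1 = \alpha-1$ when $w_i \neq 0$) immediately force $w_i \in \{0,\pm\sqrt{1-1/\alpha}\}$, so $w$ is determined up to scale by a sign pattern in $\{0,\pm 1\}^n$, and $\alpha=1/(p-1)$ is then determined by the number $p$ of nonzero entries; hence $X$ is uniquely reconstructed from a sign pattern and the count is finite. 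Your argument instead passes through the irreducible block decomposition of Theorem~\ref{thm:mfp}, isolates the single corank-one block, and applies Lemma~\ref{lem:ifp} to get $B=\tfrac{m}{m-1}(I-ww^\top)$ with $w\in\{\pm 1/\sqrt m\}^m$. The two endpoints are the same (the kernel vector is a $\{0,\pm c\}^n$ sign pattern), but the paper reaches it in a few lines without invoking the block decomposition or Lemma~\ref{lem:ifp}, whereas your version spends its machinery on a cleaner, more explicit justification of the step the paper leaves implicit, namely that ``regular'' is equivalent to $\rank X = n-1$ (via the dimension count $\binom{n-\rank X+1}{2}=1$ and the injectivity of the projection onto hollow matrices). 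Both are valid; your route is longer but more self-documenting on the regularity reduction, while the paper's is more economical once that reduction is granted.
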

\begin{proof}
By definition, a regular point is an extreme point whose normal cone
is 1-dimensional.  By Proposition~\ref{prop:laurent} the kernel of $X$
is 1-dimensional. Let $w$ be the eigenvector of $X$ with eigenvalue 0
scaled such that $|| w || = 1$. Then we can write $X$ as
$$X = D-M$$
where $M = \alpha w w^\top$, $\alpha > 0$, and $D$ is a diagonal matrix. Then, 
\begin{align*}
\vec{0} & = Xw = (D-M) w = (D - \alpha ww^\top) w = (D-\alpha I) w,
\end{align*}
where the last equality holds since $w^T w = 1$.  Thus, if $w_i \neq
0$, then $D_{i,i} = \alpha$.  Further, since $X_{i,i} = 1$ we know
that $M_{i,i} = \alpha- 1$ if $w_i \neq 0$.  We also know that
$M_{i,i} = \alpha w_i^2$. Thus, either $w_i = 0$ or $w_i = \pm
\sqrt{1-1/\alpha}$, where $\alpha$ is set so that $||w|| =
1$.

This shows that the fixed points are a subset of those matrices whose
kernel is spanned by some $w \in \{0, 1,-1\}^n$. In particular, set $w
\in \{0, 1,-1\}^n$ and $M = \alpha w w^\top$ for some
$\alpha$. Suppose that $w$ has $p$ non-zero entries. Then, $X \cdot M
= 0$ and $X=D-M$ imply that $p \alpha (1+\alpha) = p^2 \alpha^2 $ or
$\alpha = 1/(p-1)$. This shows the unique construction of $X$ from
$w$.
\end{proof}

\subsection{Iteration in $\Ln$ and the closest vertex problem}
\label{sec:elliptopeITER}

An important step in using a convex relaxation to solve a
combinatorial optimization problem involves \emph{rounding} a point
$X$ in the convex body to an \emph{integer solution} $Y$ that is
feasible for the underlying combinatorial problem.  In the classical
SDP relaxation of max-cut the integer solutions are the $\{-1,+1\}$
symmetric matrices in $\Ln$ of rank 1 (\cite{Goemans}).  An integer
solution $Y$ defines a partition of $[n]$ into two sets, where $i$ is
in the same set as $j$ if $Y_{i,j} = 1$ and in a different set if
$Y_{i,j}=-1$ The integer solutions for max-cut are exactly the
vertices of $\Ln$ (\cite{Laurent}) and one can solve the rounding
problem for the SDP relaxation by finding the closest vertex to a
matrix $X \in \Ln$.

Fixed point iteration with $T$ defines a deterministic method for
solving the rounding problem.  In particular, fixed point iteration
solves a sequence of relaxations to the closest vertex problem.
Moreover, the vertices of $\Ln$, which define partitions, are
precisely the attractive fixed points of $T$.

First note that $$||X-Y||^2 = X \cdot X + Y \cdot Y - 2 (X \cdot Y).$$ 

For a vertex $Y$, $Y \cdot Y = n^2$.  Therefore we can find the vertex
$Y$ that is closest to $X$ by maximizing $X \cdot Y$.  Relaxing this
problem to $\Ln$ gives an SDP relaxation to the closest vertex
problem, defined by $Y=T(X)$.

If $Y=T(X)$ is vertex, then $Y$ is is the closest vertex
to $X$.  On the other hand, if $Y$ is not a vertex, we consider
(recursively) the problem of finding the vertex $Z$ that is
closest to $Y$.  This involves iterating $T$ to compute $Z=T(Y)$.
Thus fixed point iteration solves a 
sequence relaxations of the closest vertex problem.  

To show the vertices are the only attractive fixed points we need the
following Lemma.

\begin{proposition}
Let $X$ be any fixed point that is not a vertex of $\Ln$.  Then, there
exists a curve $\hat{X}(\alpha)$ $(0 \le \alpha \le 1)$ such that
$\hat{X}(0) = X$ and $\hat{X}(\alpha) \cdot \hat{X}(\alpha) > X \cdot
X$ when $\alpha > 0$.
\label{prop:increase}
\end{proposition}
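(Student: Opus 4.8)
The plan is to exhibit the curve explicitly in the Gram picture of $\Ln$. Recall that every matrix in $\Ln$ is the Gram matrix of $n$ unit vectors, so deforming those vectors $w_i\mapsto w_i(\alpha)$ through unit vectors automatically keeps $X(\alpha)\in\Ln$, and we only have to make $X(\alpha)\cdot X(\alpha)=\sum_{i,j}\langle w_i(\alpha),w_j(\alpha)\rangle^2$ strictly increasing. Note that being a fixed point is precisely the statement that $X$ is a first-order local maximum of $Y\mapsto Y\cdot Y$ on $\Ln$ (since $X\cdot(X+\epsilon H)\le X\cdot X$ for every feasible direction $H$), so along any admissible curve the first-order term vanishes and the increase has to come from curvature. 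I split into cases according to the irreducible components of $X$ (the decomposition used in the proof of Theorem~\ref{thm:mfp}): since $X$ is not a vertex, $\rank(X)\ge2$, so either some irreducible component has rank $\ge2$, or $X$ is a direct sum of $k\ge2$ rank-one blocks.

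\emph{Case 1: some irreducible component of $X$ has rank $\ge2$.} Relabel so this component occupies $\{1,\dots,m\}$ and has rank $s\ge2$. The corresponding block $X|_c$ is an irreducible fixed point, so Lemma~\ref{lem:ifp} produces Gram vectors $w_1,\dots,w_m\in\reals^s$ for it satisfying $\sum_{j\le m}w_jw_j^\top=(m/s)I_s$ and $\sum_{j\le m}X_{1,j}w_j=(m/s)w_1$ (the latter being the row-$1$ instance of the eigen-relation of Lemma~\ref{lem:local}(a)), and the Gram vectors of distinct components may be taken mutually orthogonal. Choose a unit vector $a$ in the span of $w_1,\dots,w_m$ with $a\perp w_1$ (possible as $s\ge2$) and rotate only $w_1$, $w_1(\alpha)=\cos(\alpha)w_1+\sin(\alpha)a$, leaving all other vectors fixed. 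Only row and column $1$ change; the first identity kills the cross term in the expansion and the second pins down the remaining sum, giving $\sum_j X(\alpha)_{1,j}^2=\sum_j X_{1,j}^2+\sin^2\alpha$ and hence $X(\alpha)\cdot X(\alpha)=X\cdot X+2\sin^2\alpha$. The reparametrization $\alpha\in[0,1]\mapsto X(\pi\alpha/2)$ then is the required curve.

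\emph{Case 2: every irreducible component of $X$ has rank $1$.} Up to relabeling $X=\bigoplus_{c=1}^k\epsilon^{(c)}(\epsilon^{(c)})^\top$ with $k\ge2$ and $\epsilon^{(c)}\in\{-1,1\}^{n_c}$, and we may take $w_i=\epsilon_i e_c$ for $i$ in block $c$, where $e_1,\dots,e_k$ are orthonormal. Introduce a unit vector $u$ orthogonal to all of them and rotate every vector coherently, $w_i(\alpha)=\cos(\alpha)w_i+\sin(\alpha)\epsilon_i u$. Then the within-block inner products are unchanged while every cross-block pair acquires $X(\alpha)_{i,j}=\epsilon_i\epsilon_j\sin^2\alpha$, so $X(\alpha)\cdot X(\alpha)=X\cdot X+\big(n^2-\sum_c n_c^2\big)\sin^4\alpha$, and $n^2-\sum_c n_c^2>0$ because $k\ge2$; again $\alpha\mapsto X(\pi\alpha/2)$ works.

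The one nontrivial part is identifying the correct deformation, and recognizing that the two cases are different in kind. In Case~1 the gain appears at second order, but in Case~2 it only appears at fourth order, and a naive single-vector rotation — which suffices in Case~1 — can strictly \emph{decrease} $X\cdot X$ in Case~2 when the blocks have unequal sizes, so the coherent rotation is essential there. Everything else is the bookkeeping that deforming one Gram vector (or all of them, but within mutually orthogonal component subspaces) leaves all the other inner products unchanged, together with the elementary expansions of $\sum_{i,j}\langle w_i(\alpha),w_j(\alpha)\rangle^2$ indicated above.
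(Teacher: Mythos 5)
Your proof is correct, but it takes a genuinely different route from the paper's. The paper does \emph{not} decompose into irreducible blocks: it observes that since $X$ is not a vertex there exist indices $i\neq j$ with $X_{i,j}\notin\{-1,+1\}$ and $\sum_l X_{i,l}^2\le\sum_l X_{j,l}^2$, and then rotates the single Gram vector $v_i$ toward $v_j$ (or $-v_j$ when $X_{i,j}<0$). The fixed-point relation $X^2=DX$ is used to evaluate the cross term $\sum_{l\neq i}X_{i,l}X_{j,l}=X_{i,j}\sum_{l\neq i}X_{i,l}^2$, and an inequality chase using the row-norm comparison yields the strict increase in a single uniform case. By contrast, you exploit the block structure together with the frame identity $\sum_j w_jw_j^\top=(m/s)I_s$ and the eigen-relation from Lemma~\ref{lem:ifp}, which buys you clean closed-form gains ($X\cdot X+2\sin^2\alpha$ in the rank-$\ge2$ case and $X\cdot X+(n^2-\sum_c n_c^2)\sin^4\alpha$ in the all-rank-one case) at the cost of a case split. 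Both arguments are sound.

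One remark on your closing commentary: the claim that ``the coherent rotation is essential'' in Case~2 is an overstatement. Your observation that a \emph{naive} single-vector rotation can decrease $X\cdot X$ when blocks have unequal sizes is correct (rotating a vector of the larger block toward the smaller one loses), but the paper's single-vector rotation works in exactly this situation because the row-norm condition $\sum_l X_{i,l}^2\le\sum_l X_{j,l}^2$ selects the index $i$ in the smaller block, and that rotation achieves a \emph{second-order} gain even in Case~2. So the fourth-order behavior is a feature of your chosen coherent deformation, not of the geometry of $X$ at that point.
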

\begin{proof}
Since $X$ is not a vertex, there exists $i \neq j$ such that $X_{i,j}
\not \in \{-1,+1\}$ and $\sum_{l} (X_{i,l})^2 \leq \sum_{l}
(X_{j,l})^2$.  Suppose $X$ is the Gram matrix of $\{v_1, \ldots,
v_n\}$.  We will construct $\hat{X}(\alpha)$ by moving the vector
$v_i$ towards either $v_j$ or $-v_j$.

We first consider the case that $X_{i,j} \geq 0$.  In this case, we
move $v_i$ towards $v_j$.  For $0 \leq \alpha \leq 1$, define the unit
vector
$$\hat{v}_i (\alpha) = z_{\alpha}((1-\alpha) v_i + \alpha v_j),$$
where $z_{\alpha} = 1/\sqrt{(1-\alpha)^2 + \alpha^2 + 2 \alpha
  (1-\alpha) X_{i,j}}>1$ is a normalization factor.  Now define
$\hat{X}(\alpha)$ to be the Gram matrix of $\{v_1, \ldots, v_{i-1},
\hat{v}_i(\alpha), v_{i+1}, \ldots, v_n \}$. Note that for $l \neq i$,
$$\hat{X}_{i,l}(\alpha) = z_{\alpha} ((1-\alpha) X_{i,l} + \alpha X_{j,l}).$$

We now analyze the value of $\hat{X}(\alpha) \cdot \hat{X}(\alpha)$
and compare it to $X \cdot X$.
\begin{align*}
\hat{X}(\alpha) \cdot \hat{X}(\alpha) & = X \cdot X + 2 \sum_{l \neq i} (\hat{X}(\alpha)_{i,l})^2 - 2 \sum_{l \neq i} (X_{i,l})^2,
\end{align*}
where
\begin{align*}
   \sum_{l \neq i} (\hat{X}(\alpha)_{i,l})^2 &= \sum_{l \neq i} \frac{ (1-\alpha)^2 (X_{i,l})^2 + \alpha^2 (X_{j,l})^2 + 2 \alpha(1-\alpha) X_{i,l} X_{j,l}}{(1-\alpha)^2 + \alpha^2 + 2 \alpha (1-\alpha) X_{i,j}} \\
   & = \frac{(1-\alpha)^2 \sum_{l \neq i} (X_{i,l})^2 + \alpha^2 \sum_{l \neq i} (X_{j,l})^2 + 2 \alpha (1-\alpha) \sum_{l \neq i} X_{i,l} X_{j,l}}{(1-\alpha)^2 + \alpha^2 + 2 \alpha (1-\alpha) X_{i,j}}.
\end{align*}
Since $X$ is a fixed point, we know that $X^2 = DX$ with $D_{i,i} =
\sum_l (X_{i,l})^2$, which implies that
$$X_{i,j} = \frac{ \sum_{l} X_{i,l} X_{j,l}}{\sum_{l} (X_{i,l})^2}.$$
Rearranging this expression,
$$\sum_{l \neq i} X_{i,l} X_{j,l} = X_{i,j} \sum_{l} (X_{i,l})^2 - X_{i,j} = X_{i,j} \sum_{l \neq i} (X_{i,l})^2.$$
We can now see that
\begin{align*}
\sum_{l \neq i} (\hat{X}(\alpha)_{i,l})^2 & = \frac{(1-\alpha)^2 \sum_{l \neq i} (X_{i,l})^2 + \alpha^2 \sum_{l \neq i} (X_{j,l})^2 + 2 \alpha (1-\alpha) X_{i,j} \sum_{l \neq i} (X_{i,l})^2 }{(1-\alpha)^2 + \alpha^2 + 2 \alpha (1-\alpha) X_{i,j}}.
\end{align*}
Last, we note that 
$$\sum_{l \neq i} (X_{j,l})^2 = \sum_{l \neq j} (X_{j,l})^2 + 1 - (X_{i,j})^2.$$
Overall, this shows that
$$\sum_{l \neq i} (\hat{X}(\alpha)_{i,l})^2 \geq 
\frac{(1-\alpha)^2 \sum_{l \neq i} (X_{i,l})^2 + \alpha^2 [1- (X_{i,j})^2 + \sum_{l \neq j} (X_{j,l})^2] + 2 \alpha (1-\alpha) X_{i,j} \sum_{l \neq i} (X_{i,l})^2 }{(1-\alpha)^2 + \alpha^2 + 2 \alpha (1-\alpha) X_{i,j}}$$
and $\hat{X} (\alpha) \cdot \hat{X} (\alpha) > X \cdot X$ if $\alpha \neq 0$.
Moreover, $\hat{X}(\alpha) \cdot \hat{X} (\alpha)$ strictly increases as $\alpha$ increases.

When $X_{i,j} < 0$ a similar argument leads to the desired curve if we
move $v_i$ towards $-v_j$.
\end{proof}

\begin{theorem}
The vertices of $\Ln$ are the attractive fixed points of $T$.
\label{thm:attractive}
\end{theorem}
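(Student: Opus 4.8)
The plan is to prove the two inclusions separately.

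I will first handle the easy direction: no fixed point of $T$ that fails to be a vertex is attractive. Let $X$ be such a fixed point. Proposition~\ref{prop:increase} hands me a curve $\hat X(\alpha)$, $0\le\alpha\le1$, with $\hat X(0)=X$, each $\hat X(\alpha)$ a Gram matrix of unit vectors (hence in $\Ln$), and $\|\hat X(\alpha)\|^2>\|X\|^2$ for all $\alpha\in(0,1]$. Given any $\epsilon>0$, continuity of the curve lets me pick $\alpha>0$ with $\|\hat X(\alpha)-X\|<\epsilon$. Starting the iteration at $x_0=\hat X(\alpha)$, Theorem~\ref{thm:converges} shows $\|x_i\|^2$ is non-decreasing, so $\|x_i\|^2\ge\|\hat X(\alpha)\|^2>\|X\|^2$ for every $i$ and every admissible choice $x_{i+1}\in T(x_i)$; hence no limit point of $\{x_i\}$ equals $X$, and the iteration does not converge to $X$. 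As $\epsilon$ was arbitrary, $X$ is not attractive.

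For the other inclusion, recall that the vertices of $\Ln$ are exactly the rank-one matrices $Y=ww^\top$ with $w\in\{-1,1\}^n$; each is a fixed point since $Y^2=nY=(nI)Y$, so Theorem~\ref{thm:mfp} applies. The crux is to show $T$ collapses a neighborhood of $Y$ to $Y$: there is $\epsilon>0$ with $T(X)=\{Y\}$ whenever $\|X-Y\|<\epsilon$. Granting this, any iteration started within $\epsilon$ of $Y$ reaches $Y$ in one step and stays, so $Y$ is attractive. To produce $\epsilon$, I use Proposition~\ref{prop:laurent}: $X\in N(\Ln,Y)$ iff $X=D-M$ with $D$ diagonal and $M\succeq 0$, $Mw=0$ (since $\ker Y=w^{\perp}$). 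Decompose $\Sn=\mathcal D\oplus\mathcal S_w$, where $\mathcal D$ is the space of diagonal matrices and $\mathcal S_w=\{S\in\Sn:Sw=0\}$; this is a direct sum because a diagonal $D$ with $Dw=0$ vanishes (all $w_i=\pm1$) and the dimensions add to $\dim\Sn$. Now $Y=nI-M_0$ with $M_0=nI-ww^\top$, and $M_0$ restricted to $w^{\perp}$ equals $n\cdot\mathrm{Id}$, hence is positive definite. Since the projection onto $\mathcal S_w$ is linear and positive definiteness on $w^{\perp}$ is an open condition, for $X$ close enough to $Y$ its $\mathcal S_w$-component, written $-M$, satisfies $M\succeq 0$, $Mw=0$, and $\ker M=\mathrm{span}(w)$, while its $\mathcal D$-component is a diagonal matrix $D$; thus $X=D-M\in N(\Ln,Y)$, and in particular $Y\in T(X)$. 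It remains to upgrade this to $T(X)=\{Y\}$: if $Z\in T(X)$ then $X\cdot Z=X\cdot Y$, i.e.\ $X\cdot(Y-Z)=0$; since $Y$ and $Z$ share the all-ones diagonal, $D\cdot(Y-Z)=0$, forcing $M\cdot Z=M\cdot Y=w^\top Mw=0$. With $M\succeq 0$, $Z\succeq 0$, and $M\cdot Z=0$ we get $MZ=0$, so $\mathrm{range}(Z)\subseteq\ker M=\mathrm{span}(w)$; then $Z=c\,ww^\top$ and $Z_{ii}=1$ force $Z=Y$. Hence $T(X)=\{Y\}$ near $Y$, which finishes the inclusion and the theorem.

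I expect the neighborhood-collapse step to be the main obstacle: one must see both that $Y$ lies in the interior of its own normal cone (so that nearby $X$ still satisfy the criterion of Proposition~\ref{prop:laurent}) and that this cone membership already pins down $Y$ as the unique maximizer. The decomposition $\Sn=\mathcal D\oplus\mathcal S_w$ together with the positive definiteness of $M_0$ on $w^{\perp}$ is what drives both parts; the set-valuedness of $T$ then causes no trouble, since $T$ turns out to be single-valued on a neighborhood of $Y$.
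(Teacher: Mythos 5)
Your proof is correct, and the hard direction takes a genuinely different route from the paper. For the direction that non-vertex fixed points are not attractive, your argument is essentially the paper's: Proposition~\ref{prop:increase} plus the monotonicity of $\|x_i\|^2$ established in the proof of Theorem~\ref{thm:converges}. For the direction that vertices are attractive, the paper gives a short elementary sign-pattern argument: if $\|M-X\|<1$ for a vertex $X$, then every entry of $M$ has the same sign as the corresponding $\pm1$ entry of $X$, so for any $Y\in\Ln$ (with $|Y_{i,j}|\le 1$) one has $M\cdot Y\le\sum_{i,j}|M_{i,j}|=M\cdot X$, with equality forcing $Y=X$; hence $T(M)=\{X\}$ on the whole unit ball around $X$ and the iteration lands on $X$ in one step. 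You instead prove the stability of the normal cone: writing $Y=ww^\top=nI-M_0$ with $M_0=nI-ww^\top$ positive definite on $w^\perp$, you decompose $\Sn=\mathcal D\oplus\mathcal S_w$ (diagonal matrices plus symmetric matrices annihilating $w$), observe that the $\mathcal S_w$-component of $X$ near $Y$ remains the negative of a PSD matrix with kernel exactly $\mathrm{span}(w)$, invoke Proposition~\ref{prop:laurent} to get $Y\in T(X)$, and then use the complementarity fact $M\succeq0,\,Z\succeq0,\,M\cdot Z=0\Rightarrow MZ=0$ to pin down $T(X)=\{Y\}$. Your argument is sound (you could spell out that $S\mapsto Sw$ is surjective so the dimensions in the direct sum really do add, but this is routine); its virtue is making explicit that a vertex lies in the interior of its own normal cone, which is the structural reason $T$ collapses a neighborhood. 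The paper's sign-pattern argument buys simplicity and an explicit, dimension-independent radius for the basin of attraction, avoiding Proposition~\ref{prop:laurent} and the PSD complementarity lemma entirely.
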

\begin{proof}
Let $X$ be a vertex of $\Ln$ and $M \in \Ln$ with $||M-X|| < 1$.  Then
$|X_{i,j}-M_{i,j}| < 1$.  Since $X_{i,j} \in \{-1,+1\}$ the matrix $M$
has the same sign pattern as $X$.  That is, for every $+1$ entry in
$X$ the corresponding entry in $M$ is positive, and for $-1$ entry in
$X$ the corresponding entry in $M$ is negative.  If $Y \in \Ln$ then
$|Y_{i,j}| \le 1$.  Therefore $Y \neq X \Rightarrow M \cdot X > M
\cdot Y$.  We conclude $T(M) = X$ and fixed point iteration from $M$
converges to $X$ in a single step.

Now suppose $X$ is not a vertex.  Proposition~\ref{prop:increase}
implies $\forall \epsilon > 0$ $\exists Y$ with $||Y-X|| < \epsilon$
and $Y \cdot Y > X \cdot X$.  Since $T(Y) \cdot T(Y) \ge Y \cdot Y$,
fixed point iteration from $Y$ cannot converge to $X$.
\end{proof}

In the proof above we show that if $M$ is a matrix with the same sign
pattern as a vertex $X$, then $T(M)=X$.  The set of matrices $M$ for
which $T(M)=X$ was considered in \cite{Cifuentes} (related problems
were also considered in \cite{Thomas}).  More generally we would like
to understand the set $S(X)$ for which fixed point iteration starting
from $S(X)$ converges to $X$.  In ${\cal L}_3$ fixed point iteration
from a generic starting point always converges to the closest vertex.
However, in higher dimensions fixed point iteration can converge to a
vertex that is not closest to the starting point.

\bibliographystyle{plain}
\bibliography{biblio.bib}

\end{document}